\pgfplotsset{compat=1.15}
\theoremstyle{definition}
\newtheorem{mydef}{Definition}[section]
\theoremstyle{plain}
\newtheorem{thm}[mydef]{Theorem}
\newtheorem{all}[mydef]{Claim}
\newtheorem{kov}[mydef]{Corollary}
\newtheorem{mj}[mydef]{Remark}
\newtheorem{sej}[mydef]{Conjecture}
\newtheorem*{jel}{Notation}
\newcommand\cF{{\mathcal F}}
\title{A new intersection condition in extremal set theory}
\author{Kartal Nagy} %\thanks{*E{\"o}tv{\"o}s Lor\'and University}
\address{E{\"o}tv{\"o}s Lor\'and University, Budapest, Hungary}
\email{kartal97@student.elte.hu}
\begin{document}

\maketitle

\begin{abstract}
We call a family $\mathcal{F}$ $(3,2,\ell)$-intersecting if $|A \cap B|+|B \cap C|+|C \cap A| \geq \ell$ for all $A$, $B$, $C \in \mathcal{F}$. We try to look for the maximum size of such a family $\mathcal{F}$ in case when $\mathcal{F} \subset {[n] \choose k}$ or $\mathcal{F} \subset 2^{[n]}$. In the uniform case we show that if $\mathcal{F}$ is $(3,2,2)$-intersecting, then $\vert \mathcal{F} \vert \leq {n+1 \choose k-1}+{n \choose k-2}$ and if $\mathcal{F}$ is $(3,2,3)$-intersecting, then $|\mathcal{F}| \leq {n \choose k-1} + 2 {n \choose k-3} + 3 {n-1 \choose k-3}$. For the lower bound we construct a $(3,2,\ell)$-intersecting family and we show that this bound is sharp when $\ell=2$ or $3$ and $n$ is sufficiently large compared to $k$. In the non-uniform case we give an upper bound for a $(3,2,n-x)$-intersecting family, when $n$ is sufficiently large compared to $x$. 
\end{abstract}

\section{Introduction}

We denote the set $\{ 1,2,\dots,n \}$ by $[n]$, the family of the subsets of $[n]$ by $2^{[n]}$, and the family of the $i$-element subsets of $[n]$ by $\displaystyle\binom{[n]}{i}$. If $i > n$, then this notation means the empty set.
\vspace{1mm}

We will mostly use the terminology \textit{families} for collections of sets, although \textit{set systems} is also widely used in the literature. If all sets in a family $\cF$ are of the same size, we say $\cF$ is \textit{uniform}.
Otherwise $\cF$ is called \textit{non-uniform}.

\begin{mydef}
    A family $\cF$ is said to be intersecting if $|A \cap B| \neq 0$ holds for all $A$, $B \in \cF$.
\end{mydef}

The following two theorems of Erdős, Ko and Rado are central ones in extremal set theory. They determine the largest size of a family containing no disjoint pairs of sets. The first theorem solves the case of non-unifom families, the second one determines the maximum for the uniform case.

\begin{thm} {\rm \cite{EKR}}
Let $\cF\subset 2^{[n]}$ be an intersecting family. Then $|\cF|\le 2^{n-1}$.
\end{thm}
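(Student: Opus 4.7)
The plan is to use the classical complement-pairing argument, which gives a remarkably short proof. The key observation is that for any subset $A \subseteq [n]$, the set $A$ and its complement $[n] \setminus A$ are disjoint, so an intersecting family cannot contain both.

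First I would partition the power set $2^{[n]}$ into complementary pairs. Formally, define an equivalence relation on $2^{[n]}$ by declaring $A \sim B$ if and only if $B = A$ or $B = [n] \setminus A$. Every equivalence class has exactly two elements (note that since we only need $n \geq 1$ for the bound $2^{n-1}$ to be meaningful, and a set cannot equal its own complement when $n \geq 1$). This yields exactly $2^{n-1}$ pairs.

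Next I would use the intersecting property: since $A \cap ([n] \setminus A) = \emptyset$, from each complementary pair at most one member can belong to $\mathcal{F}$. Summing over the $2^{n-1}$ pairs gives $|\mathcal{F}| \leq 2^{n-1}$.

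There is no real obstacle here; the proof is essentially a one-line pigeonhole argument once the pairing is set up. The only thing to be careful about is noting that the bound is sharp, witnessed for example by $\mathcal{F} = \{A \subseteq [n] : 1 \in A\}$, which shows the inequality cannot be improved.
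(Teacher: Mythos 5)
Your complement-pairing argument is correct and complete: partitioning $2^{[n]}$ into the $2^{n-1}$ pairs $\{A,[n]\setminus A\}$ and observing that an intersecting family can contain at most one set from each pair immediately gives the bound. The paper states this theorem only as cited background from Erd\H{o}s--Ko--Rado and includes no proof of its own, so there is nothing to compare against; your proof is the standard one and is valid.
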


\begin{thm} {\rm \cite{EKR}} \label{1metszet}
Let $k$, $n \geq 2k$ be integers and $\cF\subset {[n] \choose k}$ be an intersecting family.  Then $|\cF|\le \displaystyle\binom{n-1}{k-1}$.
\end{thm}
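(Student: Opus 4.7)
The plan is to apply Katona's cyclic permutation method. Fix a cyclic arrangement of $[n]$ around a circle, and call a $k$-subset an \emph{arc} if its elements occupy $k$ consecutive positions. The argument counts pairs (cyclic arrangement, arc from $\cF$) in two ways.

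First I would prove the following \emph{local claim}: in every fixed cyclic arrangement, at most $k$ of the $n$ arcs belong to $\cF$. Suppose $A \in \cF$ is such an arc; after relabeling the positions we may assume $A = \{1, 2, \dots, k\}$. Any other arc $B \in \cF$ must intersect $A$, and a direct inspection shows that the starting position of $B$ must either be $i+1$ for some $i \in \{1, \dots, k-1\}$, or else $n-k+1+i$ for some $i \in \{1, \dots, k-1\}$ (the wrap-around case). For each such $i$, the hypothesis $n \geq 2k$ forces the two arcs corresponding to these symmetric starting positions to be disjoint, so $\cF$ contains at most one of them. Together with $A$ itself, this yields at most $1 + (k-1) = k$ arcs in $\cF$.

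For the second part I would apply the double count. There are $(n-1)!$ cyclic arrangements of $[n]$; each contains exactly $n$ arcs; and a fixed $A \in \binom{[n]}{k}$ appears as an arc in exactly $k!(n-k)!$ of them (treat $A$ as a block, cyclically arrange the resulting $n-k+1$ objects in $(n-k)!$ ways, then internally order $A$ in $k!$ ways). Counting pairs (arrangement, arc in $\cF$) in two ways gives
\[
k \cdot (n-1)! \;\geq\; |\cF| \cdot k!(n-k)!,
\]
which rearranges to $|\cF| \leq \binom{n-1}{k-1}$.

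The main obstacle is the local claim, which is really the heart of the argument; once it is established, the double counting is a routine manipulation. The hypothesis $n \geq 2k$ enters exactly at the point where we need the two arcs in each symmetric pair to be disjoint, and indeed the bound fails when $n < 2k$ (then any two $k$-subsets of $[n]$ already intersect, and the whole of $\binom{[n]}{k}$ is an intersecting family).
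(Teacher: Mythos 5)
The paper does not prove this statement at all: it is the classical Erd\H{o}s--Ko--Rado theorem, quoted from \cite{EKR} as background, so there is no internal proof to compare against. Your argument is a correct and complete rendition of Katona's cyclic permutation proof. The local claim is handled properly: the arcs meeting $A=\{1,\dots,k\}$ other than $A$ itself are exactly those starting at position $i+1$ or at position $n-k+1+i$ for $i\in\{1,\dots,k-1\}$, and for each fixed $i$ these two arcs are disjoint precisely when $n\ge 2k$, giving at most $1+(k-1)=k$ arcs of $\cF$ per cyclic order. The double count is also right: $(n-1)!$ cyclic orders, $n$ arcs each, and each $k$-set is an arc in $k!(n-k)!$ of them, so $|\cF|\,k!(n-k)!\le k\,(n-1)!$, i.e.\ $|\cF|\le\binom{n-1}{k-1}$. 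This is a standard, self-contained proof of the cited result and could replace the citation if one wished, though for the paper's purposes the citation suffices.
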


The concept "intersecting family" has several generalizations, as follows.

\begin{mydef}
A family $\mathcal{F}$ is said to be \textit{$t$-intersecting} if $|A \cap B| \geq t$ holds for all $A$, $B \in \mathcal{F}$.
\end{mydef}

\begin{mydef}
A family $\mathcal{F}$ is said to be \textit{trivially $t$-intersecting} if there exists a $t$-set $T$ such that $T \subset F$ holds for all $F \in \mathcal{F}$, and \textit{non-trivially $t$-intersecting} if it is $t$-intersecting but $|\bigcap_{F \in \mathcal{F}} F| < t$.
\end{mydef}

It was proved by Erdős, Ko, and Rado in their seminal paper \cite{EKR} that for any $1 \leq t < k$, the largest $t$-intersecting families $\mathcal{F} \subset {[n] \choose k}$ are the maximal trivially $t$-intersecting ones if $n$ is large enough. The threshold function $n_0(k,t) = (k-t+1)(t+1)$ of this property was determined by  Frankl \cite{Frankl3} for $t \geq 15$. Later Wilson \cite{Wilson} filled in the gap $2 \leq t \leq 14$.

\begin{thm} \label{tmetszet}
Trivially $t$-intersecting family is the maximal $t$-inter\-secting family if and only if $n \geq (t+1)(k-t+1)$.
\end{thm}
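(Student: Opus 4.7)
The plan is to establish both directions of the equivalence. The trivially $t$-intersecting families $\{F \in \binom{[n]}{k}: T \subset F\}$ (for a fixed $t$-set $T$) all have size $\binom{n-t}{k-t}$, so this quantity is the benchmark the non-trivial constructions must match or beat.

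For the ``only if'' direction, I would exhibit a non-trivially $t$-intersecting family that strictly beats $\binom{n-t}{k-t}$ whenever $n < (t+1)(k-t+1)$. A natural candidate is the Frankl-type family
\[
\mathcal{G} = \left\{F \in \binom{[n]}{k} : |F \cap [t+2]| \geq t+1\right\}.
\]
Any two members intersect inside $[t+2]$ in at least $2(t+1) - (t+2) = t$ elements, so $\mathcal{G}$ is $t$-intersecting, and $\bigcap_{F \in \mathcal{G}} F = \emptyset$, so it is non-trivial. A direct count gives $|\mathcal{G}| = (t+2)\binom{n-t-2}{k-t-1} + \binom{n-t-2}{k-t-2}$. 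Expanding $\binom{n-t}{k-t}$ via Pascal's identity twice and comparing the two expressions, the inequality $|\mathcal{G}| > \binom{n-t}{k-t}$ reduces to $t(k-t) > n-k-1$, which rearranges exactly to $n < (t+1)(k-t+1)$.

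For the ``if'' direction, I would use the shifting technique: since the shifts $S_{ij}$ preserve both $|\mathcal{F}|$ and the $t$-intersecting property, I may assume $\mathcal{F}$ is left-shifted. Then I would induct on $k+n$, splitting on whether every member of $\mathcal{F}$ contains $[t]$. If yes, the trivial bound $\binom{n-t}{k-t}$ is immediate. Otherwise there is an $F \in \mathcal{F}$ with $F \cap [t]$ missing some index $i \le t$; I would then analyze the link $\mathcal{F}(n) = \{F \setminus \{n\} : n \in F \in \mathcal{F}\}$ together with $\mathcal{F} \setminus \mathcal{F}(n)$, showing the former is still $t$-intersecting on $[n-1]$ while the latter is $(t+1)$-intersecting (using shiftedness to push elements of $F \setminus \{n\}$ leftward), and then combining the two inductive bounds.

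The main obstacle is keeping the induction tight enough that the precise threshold $(t+1)(k-t+1)$ emerges rather than a weaker asymptotic bound. This requires carefully tracking how the reduction changes the parameters $(n,k,t)$, and in particular handling the boundary case where $\mathcal{F}$ is close to one of the extremal families $\mathcal{G}$ above. This is precisely the delicate step that distinguishes the sharp Frankl--Wilson threshold from Erdős--Ko--Rado's original asymptotic estimate, and for small $t \le 14$ one essentially has to invoke Wilson's eigenvalue/algebraic argument \cite{Wilson} rather than a purely combinatorial shifting induction.
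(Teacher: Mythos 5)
The paper does not actually prove this theorem: it is quoted as background, attributed in the surrounding text to Frankl \cite{Frankl3} for $t \ge 15$ and to Wilson \cite{Wilson} for $2 \le t \le 14$, so there is no internal proof to compare yours against. Your ``only if'' direction is correct and complete: the family $\mathcal{G}=\{F\in\binom{[n]}{k} : |F\cap[t+2]|\ge t+1\}$ is non-trivially $t$-intersecting, the count $|\mathcal{G}|=(t+2)\binom{n-t-2}{k-t-1}+\binom{n-t-2}{k-t-2}$ is right, and comparing with $\binom{n-t}{k-t}=\binom{n-t-2}{k-t-2}+2\binom{n-t-2}{k-t-1}+\binom{n-t-2}{k-t}$ does reduce the strict inequality to $t(k-t)>n-k-1$, which is exactly $n<(t+1)(k-t+1)$.

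The ``if'' direction, however, is only a sketch with a genuine gap, and you concede as much in your final sentence. The left-shifting induction you outline (splitting on whether every member contains $[t]$, then passing to the link at $n$ and arguing the complement is $(t+1)$-intersecting) is essentially the Erd\H{o}s--Ko--Rado/Frankl scheme, but run naively it only yields the bound $\binom{n-t}{k-t}$ for $n$ comfortably above the threshold; extracting the sharp constant $(t+1)(k-t+1)$ is precisely where Frankl's argument becomes substantially more delicate, and for $2\le t\le 14$ no purely combinatorial proof along these lines is known --- one needs Wilson's eigenvalue computation in the Johnson scheme. So as a self-contained argument the proposal does not close the forward direction and ultimately falls back on citing the same two papers the present paper cites. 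That is an acceptable resolution for a background result, but it should then be presented as a citation rather than as a proof.
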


The non-uniform version of the theorem for $t \geq 2$ was proved by Katona.

\begin{thm} {\rm \cite{Katona}}
Let $\cF\subset 2^{[n]}$ be a $t$-intersecting family with $1 \leq t \leq n$. Then
\begin{displaymath}
|\mathcal{F}| \leq
\left\{ \begin{array}{c c}
\sum_{i=\frac{n+t}{2}}^n {n \choose i} & \text{if $n+t$ is even} \vspace{1mm} \\ \vspace{1mm}
\sum_{i=\frac{n+t+1}{2}}^n {n \choose i} + {n-1 \choose \frac{n+t-1}{2}} & \text{if $n+t$ is odd}
\end{array}
\right.
\end{displaymath}
\end{thm}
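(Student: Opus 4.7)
The plan is to reduce the theorem to Kleitman's classical diameter bound on subsets of $[n]$, and then translate the resulting estimate via complementation. First, observe that for any $A, B \in \cF$,
\[
|A \triangle B| = |A \cup B| - |A \cap B| \leq n - |A \cap B| \leq n - t,
\]
so the $t$-intersecting hypothesis implies that $\cF$ has Hamming diameter at most $d := n-t$.

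Next, invoke Kleitman's diameter theorem: any family $\mathcal{G} \subseteq 2^{[n]}$ with Hamming diameter at most $d$ satisfies $|\mathcal{G}| \leq \sum_{i=0}^{r} \binom{n}{i}$ when $d = 2r$ is even, and $|\mathcal{G}| \leq \sum_{i=0}^{r} \binom{n}{i} + \binom{n-1}{r}$ when $d = 2r+1$ is odd. Apply this with $d = n-t$; since $n-t$ and $n+t$ share the same parity, the two cases of Kleitman's bound align with the two cases in the statement. Finally, rewrite the bound using the symmetry $\binom{n}{i} = \binom{n}{n-i}$ (and the corresponding identity for the correction term in the odd case), which converts $\sum_{i=0}^{(n-t)/2} \binom{n}{i}$ into $\sum_{i=(n+t)/2}^{n} \binom{n}{i}$ and $\binom{n-1}{(n-t-1)/2}$ into $\binom{n-1}{(n+t-1)/2}$, yielding the bound of the theorem exactly.

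The main obstacle is proving Kleitman's diameter theorem itself, which is the technical heart of the argument. The standard route uses a shifting/compression technique: for each pair $i < j$, the operation that (where possible) replaces $j$ by $i$ in every set of the family does not increase the Hamming diameter, and after iterating this until stability the family is left-compressed and can be bounded directly by the volume of a Hamming ball centred at $\emptyset$. The odd-$d$ case requires an additional half-layer correction, which is precisely the origin of the extra $\binom{n-1}{r}$ term. Sharpness of the bound is then verified by checking that the family $\{A \subseteq [n] : |A| \geq (n+t)/2\}$ (together with an appropriate middle-layer supplement when $n+t$ is odd) is $t$-intersecting, since any two such sets $A,B$ satisfy $|A \cap B| \geq |A|+|B|-n \geq t$.
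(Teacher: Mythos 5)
First, note that the paper itself gives no proof of this statement: it is quoted from Katona's 1964 paper as a known result, so there is no internal argument to compare against. Your reduction of the intersection theorem to Kleitman's diameter theorem is correct and classical: $|A \triangle B| = |A \cup B| - |A \cap B| \leq n - t$, the parities of $n-t$ and $n+t$ agree, and the identities $\binom{n}{i} = \binom{n}{n-i}$ and $\binom{n-1}{(n-t-1)/2} = \binom{n-1}{(n+t-1)/2}$ translate the Hamming-ball bound into the stated one. If you are permitted to cite Kleitman's diameter theorem as a black box, the argument is complete, modulo one caution: the usual derivation of the diameter theorem in the literature goes \emph{through} Katona's intersection theorem, so you must make sure the proof of the diameter theorem you invoke is independent, otherwise the argument is circular.

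The genuine gap is in your sketch of the diameter theorem itself, which you correctly identify as the technical heart. Two problems. First, the $(i,j)$-replacement compression can increase the symmetric difference of an individual pair: if $j \in A$, $i \notin A$ and $A$ is kept because $A - \{j\} \cup \{i\}$ already lies in the family, while $B$ is moved to $B - \{j\} \cup \{i\}$, then $|A \triangle (B - \{j\} \cup \{i\})| = |A \triangle B| + 2$. Preserving the diameter therefore requires the witness argument with $A - \{j\} \cup \{i\}$ (exactly the device used in Claim \ref{tolas} of this paper for $d$), not the bare assertion that distances do not grow. Second, and fatally for the sketch, these compressions preserve all set sizes, so a left-compressed family of diameter $d$ is in no way forced into a Hamming ball centred at $\emptyset$: the extremal family $\{A : |A| \geq (n+t)/2\}$ is itself invariant under every such compression and sits at maximal distance from $\emptyset$. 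Hence ``left-compressed $\Rightarrow$ bounded by the volume of a ball around $\emptyset$'' is false, and the final step of your sketch does not go through. Kleitman's diameter theorem needs a genuinely different argument (his original induction, a derivation from Katona's theorem, or the linear-algebraic proof); as written, your key lemma remains unproved.
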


One of the famous questions of this area is the Erdős Matching Conjecture (EMC), in which the maximum number of $k$-element subsets should be determined under the condition that no $s+1$ of them are pairwise disjoint. This value is denoted by $m(n,k,s)$.

\begin{sej} \label{mnks} {\rm \cite{ErdosMC}}
    For $n \geq k(s+1)$, $$m(n,k,s) =\max \left\{ {n \choose k} - {n-s \choose k},{k(s+1)-1 \choose k} \right\}.$$
\end{sej}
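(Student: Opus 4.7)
The plan is to establish matching lower and upper bounds for $m(n,k,s)$. The lower bound is the easier direction, obtained by two explicit constructions. First, take $\cF_1 = \{F \in \binom{[n]}{k} : F \cap [s] \neq \emptyset\}$, of size $\binom{n}{k} - \binom{n-s}{k}$; any $s+1$ pairwise disjoint members would all have to hit $[s]$, so by pigeonhole two of them would share an element of $[s]$, contradicting disjointness. Second, take $\cF_2 = \binom{[k(s+1)-1]}{k}$, of size $\binom{k(s+1)-1}{k}$; this contains no $s+1$ pairwise disjoint $k$-subsets because their union would have $k(s+1)$ elements, exceeding the ground set. Together these two constructions yield $m(n,k,s) \geq \max\{\binom{n}{k}-\binom{n-s}{k},\,\binom{k(s+1)-1}{k}\}$.

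For the upper bound I would first apply the shifting technique: by repeatedly performing $(i,j)$-shifts with $i<j$, one may replace $\cF$ by a left-compressed family of the same size that still has no $s+1$ pairwise disjoint members. Next, on the shifted family I would combine two ingredients. For $n$ large, a $\Delta$-system (sunflower) argument is effective: if $|\cF|$ exceeded the star construction, one could extract a sunflower whose petals yield $s+1$ pairwise disjoint sets, a contradiction. For $n$ close to $k(s+1)$, one would argue directly from the left-compressedness: every sufficiently ``heavy'' set should already be forced into $\binom{[k(s+1)-1]}{k}$, so $\cF$ cannot exceed $\cF_2$. The crossover between these two regimes, where one construction overtakes the other, is the delicate point.

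The main obstacle is that Conjecture~\ref{mnks} is famously open in full generality. Erdős' original argument handles only $n$ very large compared to $k$ and $s$; the conjecture is known in full for $k=2$ (Erdős--Gallai) and $k=3$ (Frankl), and Frankl has established the bound for all $n \geq (2s+1)k - s$ in the general case. The genuine bottleneck in any proof attempt is the intermediate range $k(s+1) \leq n < (2s+1)k-s$, where the two extremal constructions coexist and neither shifting nor sunflowers alone appear to suffice; progress here has required substantial additional stability and weight-function machinery. A realistic goal for the sketch above is therefore to reprove the large-$n$ case; I would not expect the combination of shifting and $\Delta$-systems to settle the sharp threshold.
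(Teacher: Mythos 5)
You have correctly recognized that the statement labeled \ref{mnks} is the Erd\H{o}s Matching Conjecture, which the paper presents as an open conjecture with no proof of its own; there is therefore nothing in the paper to compare your argument against. Your lower-bound half is complete and correct: the family $\{F \in \binom{[n]}{k} : F \cap [s] \neq \emptyset\}$ and the family $\binom{[k(s+1)-1]}{k}$ are exactly the two conjectured extremal constructions, and your pigeonhole and cardinality arguments showing neither contains $s+1$ pairwise disjoint members are sound. Your assessment of the upper bound is also accurate and consistent with the survey of partial results in the paper's introduction (Erd\H{o}s--Gallai for $k=2$, Frankl for $k=3$, Frankl's bound for $n \geq (2s+1)k-s$, and Frankl--Kupavskii for large $s$); the shifting-plus-sunflower sketch would at best recover the large-$n$ regime, and you are right that the intermediate range $k(s+1) \leq n < (2s+1)k - s$ is the genuine obstruction. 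In short: the proposal proves exactly as much as can currently be proved, and correctly declines to claim more.
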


It was one of the favourite problems of Erdős and there was hardly a combinatorial lecture of him where he did not mention it. %\cite{ErdosMCcikk} 
The EMC is trivial for $k=1$ and was proved by Erdős and Gallai \cite{ErdosGallai} for $k=2$ then it was settled in the case $k=3$ and $n \geq 4s$ in \cite{FranklRodl}, for $k=3$, all $n$ and $s \geq s_0$ in \cite{Luczak}, and finally, it was completely resolved for $k=3$ in \cite{Frankl4}.

The case $s=1$ is the classical Erdős-Ko-Rado theorem. In his original paper, Erdős proved \ref{mnks} for $n \geq n_0 (k,s)$ for some $n_0 (k,s)$. His result was sharpened by Bollobás, Daykin and Erdős \cite{Bollobas}, they proved for $n \geq 2k^3s$. Subsequently, Hao, Loh and Sudakov \cite{Huang} proved the EMC for $n \geq 3k^2s$.

Frankl \cite{Frankl5} showed that $m(n,k,s) \leq  {n \choose k} - {n -s \choose k}$ when $n \geq (2s+1)k-s$. For $s$ very large Frankl and Kupavskii prove the following theorem.

\begin{thm} {\rm \cite{FranklKup}}
There exists an absolute constant $s_0$, such that $$m(n,k,s)= {n \choose k} - {n -s \choose k}$$ holds if $n \geq \frac{5}{3}sk-\frac{2}{2}s$ and $s \geq s_0$.
\end{thm}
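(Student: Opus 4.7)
The plan is to match a simple lower-bound construction against a shifting-plus-counting upper bound. For the lower bound, take $\cF_0 = \{F \in \binom{[n]}{k} : F \cap [s] \neq \emptyset\}$: any matching in $\cF_0$ uses a distinct element of $[s]$ per member and hence has size at most $s$, while $|\cF_0| = \binom{n}{k} - \binom{n-s}{k}$ matches the claimed value.

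For the upper bound I would apply the classical shifting procedure: iterate the operators $S_{ij}$, $1 \leq i < j \leq n$, until $\cF$ is left-shifted, which preserves $|\cF|$ and the absence of a matching of size $s+1$. Assume for contradiction $|\cF| > \binom{n}{k} - \binom{n-s}{k}$. Then some $F^* \in \cF$ is disjoint from $[s]$, and by repeated shifting we may take $F^* = \{s+1,\ldots,s+k\}$. Shiftedness now forces every $k$-set ``leftward-dominated'' by $F^*$ to lie in $\cF$ as well, which is a rich structural conclusion: in particular $\cF$ contains an essentially complete subfamily supported on a short initial segment of $[n]$. The task is then to locate $s+1$ pairwise disjoint members inside $\cF$, yielding the desired contradiction with the no-$(s+1)$-matching hypothesis.

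The main obstacle is realising the sharp constant $\tfrac{5}{3}$ in the threshold, which is strictly better than Frankl's earlier $(2s+1)k-s$. This requires a two-phase argument that selects some of the $s+1$ disjoint sets greedily from an initial segment of $[n]$ (where shiftedness guarantees that almost all $k$-subsets belong to $\cF$), and chooses the remaining disjoint sets further to the right using a counting argument driven by the deficit $\binom{n-s}{k} - \bigl|\binom{[n]}{k} \setminus \cF\bigr|$. The tradeoff between how many disjoint sets are extracted in each phase, and from how large a segment, is what pins down the leading coefficient, and the optimum balance yields $\tfrac{5}{3}$. A convenient implementation is Katona-style averaging over cyclic orderings of $[n]$, which smears the size deficit uniformly across the $s+1$ positions of a random ordered matching; the hypothesis $s \geq s_0$ then absorbs the lower-order correction terms that appear in the optimization.
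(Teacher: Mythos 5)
This statement is quoted in the paper as a known result of Frankl and Kupavskii (cited as \cite{FranklKup}); the paper itself offers no proof of it, so there is nothing internal to compare your attempt against. Judged on its own, your proposal is not a proof but a plan whose hard part is left undone. The lower-bound half is fine: the family of all $k$-sets meeting $[s]$ is the standard extremal example and has the stated size. The reduction to shifted families is also legitimate, as is the observation that if $|\cF|$ exceeds $\binom{n}{k}-\binom{n-s}{k}$ then some member avoids $[s]$ and shiftedness gives a rich initial-segment structure. But everything after that is aspiration rather than argument: you write that ``the task is then to locate $s+1$ pairwise disjoint members'' and that a two-phase greedy/counting ``tradeoff \dots is what pins down the leading coefficient,'' without exhibiting the selection, the counting, or the optimization. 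That is exactly the content of the theorem.

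More seriously, the route you gesture at is unlikely to reach the constant $\tfrac{5}{3}$. Shifting plus Katona-style cyclic averaging is essentially the machinery behind Frankl's earlier bound $n\ge (2s+1)k-s$, i.e., leading coefficient $2$; there is no indication in your sketch of what new ingredient would push this down to $\tfrac{5}{3}$. In the actual Frankl--Kupavskii proof the improvement comes from concentration inequalities (Chernoff/Hoeffding-type bounds for the distribution of $|F\cap S|$ over a random or typical member $F$ of the family), which is why their paper is titled as it is and why the hypothesis $s\ge s_0$ appears: the largeness of $s$ is needed for the probabilistic estimates to dominate, not merely to ``absorb lower-order correction terms'' in an elementary optimization. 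Without either carrying out a genuinely new counting argument that achieves $\tfrac53$ or importing the concentration machinery, the proposal has a gap precisely at the theorem's main difficulty.
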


In the non-uniform case Kleitman considered when $s=2$. 

\begin{thm} {\rm \cite{Kleitman}}
Let $n=3m+2$ and $\mathcal{F} \subset 2^{[n]}$ be a family with no three of whose members are pairwise disjoint. Then $$|\mathcal{F}| \leq \sum_{j = m+1}^{3m+2} {3m+2 \choose j}$$ is the best upper bound.
\end{thm}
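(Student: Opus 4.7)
The plan is to establish both directions of the bound by separate arguments.

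For the lower bound I would take $\mathcal{G}=\{A\subseteq[n]:|A|\ge m+1\}$. Any three pairwise disjoint members of $\mathcal{G}$ would have total size at most $n=3m+2$, yet each summand is at least $m+1$, forcing the total to be at least $3m+3$, a contradiction. Thus $\mathcal{G}$ satisfies the hypothesis with $|\mathcal{G}|=\sum_{j=m+1}^{3m+2}\binom{3m+2}{j}$.

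For the upper bound I would use two successive compression steps. First, I may assume $\mathcal{F}$ is an upset: adjoining any superset $A'\supseteq A$ of some $A\in\mathcal{F}$ cannot create a new pairwise disjoint triple, because $A\cap B\subseteq A'\cap B$ for every $B$, so a hypothetical forbidden triple $\{A',B,C\}$ immediately pulls back to $\{A,B,C\}$, already in $\mathcal{F}$. Taking the upset closure only enlarges $\mathcal{F}$. Second, I apply the standard left-shifts $S_{ij}(A)=(A\setminus\{j\})\cup\{i\}$ for $i<j$ to make $\mathcal{F}$ shifted; these preserve $|\mathcal{F}|$, and also preserve the hypothesis via an exchange argument: any hypothetical new disjoint triple must contain a shifted set $X\ni i$, forcing the other two members to avoid $i$, and then swapping shifted members back to their preimages (noting that a shifted set is in the new family only because its unshifted preimage was in the old) reconstructs a disjoint triple already present in the original family.

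Now with $\mathcal{F}$ both upset and shifted, my goal is to show it contains no set of size at most $m$. If $A\in\mathcal{F}$ with $|A|=k\le m$, shiftedness forces $\{1,\ldots,k\}\in\mathcal{F}$; the complement $\{k+1,\ldots,n\}$, of size at least $2m+2$, can be split into disjoint sets $B,C$ of size at least $m+1$ each. If both $B,C\in\mathcal{F}$, then the triple $\{\{1,\ldots,k\},B,C\}$ is pairwise disjoint, contradicting the hypothesis; so no such $A$ can exist, yielding $|\mathcal{F}|\le\sum_{j=m+1}^{3m+2}\binom{3m+2}{j}$.

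The main obstacle is showing that $B$ and $C$ actually lie in $\mathcal{F}$, which does not follow from shiftedness and upset-closure alone. One natural attempt is to choose $B$ and $C$ as initial segments of $[n]\setminus\{1,\ldots,k\}$ and argue by a refined shifting or inductive step that an extremal shifted upset family must contain enough $(m+1)$-sets to include them; alternatively, one can replace this step with a matching or counting argument over the pairs of complementary $(m+1)$-sets inside $[n]\setminus A$, bounding how many of them can be absent from $\mathcal{F}$ and showing that any such deficiency is incompatible with $|\mathcal{F}|$ attaining the claimed value.
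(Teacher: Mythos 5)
This theorem is quoted from Kleitman's 1968 paper and is not proved in the present manuscript, so your proposal can only be judged on its own merits. The lower-bound construction and the two compression steps (upset closure and left-shifting both preserve the absence of three pairwise disjoint members) are correct and standard. But the step you yourself flag as the ``main obstacle'' is not a technical gap that a refinement will close: the structural claim it would rest on is false. The star $\{F \subseteq [n] : 1 \in F\}$ is an upset, is shifted, is intersecting (hence certainly has no three pairwise disjoint members), and contains the set $\{1\}$ of size $1 \le m$. So a shifted upset family satisfying the hypothesis can perfectly well contain sets of size at most $m$; nothing forces the two complementary $(m+1)$-sets $B$ and $C$ into $\mathcal{F}$, and no amount of further shifting will change that, since the star is already fully compressed.

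What this shows is that the intended conclusion ``$\mathcal{F}$ contains no set of size $\le m$'' can only hold (if at all) for \emph{extremal} families, and proving that requires already knowing the bound --- the argument as structured is circular. Your fallback suggestion (a counting argument over complementary pairs of $(m+1)$-sets showing that any missing pair forces a compensating deficiency) is the right instinct but is exactly the hard content of the theorem, and it is not supplied. Kleitman's actual proof proceeds by a different route, an induction on $n$ that splits $\mathcal{F}$ according to containment of the element $n$ and trades sets between the two parts; some version of that, or of a Katona-type averaging over a suitable partition of $2^{[n]}$, is needed here. As it stands the upper-bound half of your proof is incomplete, and the specific plan proposed for completing it cannot work in the stated form.
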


\begin{thm} {\rm \cite{Kleitman}}
Let $n=3m$ and $\mathcal{F} \subset 2^{[n]}$ be a family with no three of whose members are pairwise disjoint. Then $$|\mathcal{F}| \leq \sum_{j = m+1}^{3m} {3m \choose j} + \frac{2}{3}{3m \choose m}$$ is the best upper bound.
\end{thm}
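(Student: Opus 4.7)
The plan is to split the argument into a lower bound (via an explicit construction) and a matching upper bound (organized around a reduction to upward-closed families). For the lower bound, I would fix an element $x \in [n]$ and take
$$\mathcal{F}^* = \bigcup_{j=m+1}^{3m} \binom{[n]}{j} \;\cup\; \bigl\{F \in \binom{[n]}{m} : x \notin F\bigr\}.$$
Any three pairwise disjoint subsets of $[3m]$ have total size at most $3m$, ruling out three disjoint sets each of size $\geq m+1$. Three pairwise disjoint sets of size $m$ must partition $[n]$; in such a partition exactly one part contains $x$ and is excluded from $\mathcal{F}^*$. Hence $\mathcal{F}^*$ has no three pairwise disjoint members, and a direct count gives $|\mathcal{F}^*| = \sum_{j=m+1}^{3m} \binom{3m}{j} + \binom{3m-1}{m} = \sum_{j=m+1}^{3m} \binom{3m}{j} + \frac{2}{3}\binom{3m}{m}$, since $\binom{3m-1}{m} = \frac{2m}{3m}\binom{3m}{m}$.

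For the upper bound, I would first replace $\mathcal{F}$ by its upward closure $\mathcal{F}^{\uparrow}$. This preserves the no-three-disjoint property: any three pairwise disjoint sets in $\mathcal{F}^{\uparrow}$ each contain a member of $\mathcal{F}$, and those members are themselves pairwise disjoint, contradicting the hypothesis. Since $|\mathcal{F}^{\uparrow}| \geq |\mathcal{F}|$, one may assume $\mathcal{F}$ is upward-closed, so $\mathcal{F} = \mathrm{up}(\mathcal{A})$ for the antichain $\mathcal{A} = \min(\mathcal{F})$, which inherits the no-three-disjoint condition.

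In the clean case $\mathcal{A} \subseteq \binom{[n]}{m}$, the target estimate reduces to $|\mathcal{A}| \leq \frac{2}{3}\binom{3m}{m}$. I would prove this by double counting over ordered triples $(A,B,C)$ partitioning $[n]$ into three $m$-sets: there are $\binom{3m}{m,m,m}$ such triples, each $m$-subset occurs as one coordinate in exactly $3\binom{2m}{m}$ of them, and the hypothesis forces each ordered triple to contain at most two members of $\mathcal{A}$. Rearranging yields $|\mathcal{A}| \cdot 3\binom{2m}{m} \leq 2\binom{3m}{m,m,m} = 2\binom{3m}{m}\binom{2m}{m}$, i.e.\ $|\mathcal{A}| \leq \frac{2}{3}\binom{3m}{m}$. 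Combined with $|\mathcal{F} \cap \bigcup_{j>m}\binom{[n]}{j}| \leq \sum_{j>m}\binom{3m}{j}$, this closes the case.

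The main obstacle will be handling $\mathcal{A}$ with elements outside level $m$. Minimum elements of size $>m$ only contribute to the $\bigcup_{j>m}\binom{[n]}{j}$ portion and are absorbed into the allowed sum, so they are harmless. However, a minimum element $F$ of size $<m$ forces the entire upset of $F$ into $\mathcal{F}$, pulling in extra $m$-sets beyond $\mathcal{A} \cap \binom{[n]}{m}$. Applying the no-three-disjoint condition to $F$ together with any two members of $\mathcal{F}$ disjoint from $F$ shows that $\{G \in \mathcal{F} : G \cap F = \emptyset\}$, viewed on $[n]\setminus F$, is intersecting, giving the crude bound $|\mathcal{F}| \leq 2^n - 2^{n-|F|-1}$ which is not always sharp. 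I expect to close the gap by an induction on $n$ or by Kleitman's original symmetric-chain decomposition, combining the level-$m$ partition counting with a careful accounting of how a small minimum element constrains the rest of $\mathcal{F}$.
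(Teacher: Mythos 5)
This statement is Kleitman's 1968 theorem, which the paper only cites as background (reference \cite{Kleitman}) and does not prove, so there is no in-paper argument to compare against; I am judging your proposal on its own.

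Your lower bound is correct and complete: the family of all sets of size greater than $m$ together with the $m$-sets avoiding a fixed element has no three pairwise disjoint members, and the identity $\binom{3m-1}{m}=\frac{2}{3}\binom{3m}{m}$ gives the claimed count. Your level-$m$ double count is also correct and, in fact, applies directly to $\mathcal{F}\cap\binom{[n]}{m}$ without any reduction to antichains: ordered partitions of $[3m]$ into three $m$-blocks contain at most two members of $\mathcal{F}$ each, yielding $|\mathcal{F}\cap\binom{[n]}{m}|\leq\frac{2}{3}\binom{3m}{m}$.

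The genuine gap is the treatment of sets of size less than $m$, which you explicitly defer, and which is precisely the crux of Kleitman's theorem. Your budget allocates $\sum_{j>m}\binom{3m}{j}$ to the upper levels and $\frac{2}{3}\binom{3m}{m}$ to level $m$, leaving nothing for levels below $m$; so you must show that every set of $\mathcal{F}$ below level $m$ forces a compensating deficit at levels $\geq m$, and your level-$m$ double count does not interact with the lower levels at all. The crude bound you mention is far too weak: if $\mathcal{F}$ contains a set $F$ with $|F|=m-1$, it gives roughly $2^{3m}-2^{2m}$, whereas the target bound is $2^{3m}-\sum_{j=0}^{m}\binom{3m}{j}+\frac{2}{3}\binom{3m}{m}$, and since $\binom{3m}{m}\sim(27/4)^m/\sqrt{m}$ grows much faster than $2^{2m}=4^m$, the crude bound exceeds the target by an exponentially large margin. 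Passing to the upward closure does not remove small minimal elements either (a star generated by a singleton is upward-closed), so no reduction you have set up eliminates this case. A correct proof needs Kleitman's actual mechanism --- a partition of $2^{[n]}$ into suitable chains or blocks on each of which the no-three-disjoint condition caps the density, applied simultaneously across all levels --- or an induction carrying a stronger hypothesis; as written, the argument establishes the bound only for families with no member of size below $m$.
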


\section{Results}

The condition of Kleitman's theorem can be written in the following form: $\vert A \cap B \vert + \vert B \cap C \vert + \vert C \cap A \vert \geq 1$ holds for any 3 distinct members $A$, $B$, $C \in \cF$. The main goal of the present paper is to study the case when 1 is replaced by $\ell$. Since the case $\ell=1$ has been studied a lot, we will only look at the case $\ell \geq 2$. 

Let $d: (2^{[n]})^3 \rightarrow \mathbb{N}$ be a function such that for all $A$, $B$, $C \in 2^{[n]}$, $d(A,B,C)= \vert A \cap B \vert + \vert B \cap C \vert + \vert C \cap A \vert$. Denote by $\mathbf{H}_\ell$ the class of families such that $\mathcal{H}$, $d(A,B,C) \geq \ell$ for all sets $A$, $B$, $C \in \mathcal{H}$. Let $g(n,k,\ell) = \max \{ |\mathcal{F}| : \mathcal{F} \subset {[n] \choose k}, \mathcal{F}\in \mathbf{H_{\ell}}\}$ and $h(n,\ell) = \max \{ |\mathcal{F}| : \mathcal{F} \subset 2^{[n]}, \mathcal{F}\in \mathbf{H_{\ell}}\}$. 

First we consider the uniform case. We prove the following upper and lower bounds:

\begin{thm}
\label{uni1}
Let $n \geq 3k-2 \geq 4$ be positive integers. Then 
$$g(n,k,2) \leq {n+1 \choose k-1}+{n \choose k-2}.$$
\end{thm}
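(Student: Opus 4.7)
The plan is to dichotomize according to whether $\cF$ is intersecting. If $\cF$ is intersecting, Theorem~\ref{1metszet} (EKR) gives $|\cF|\leq\binom{n-1}{k-1}$, which is strictly less than $\binom{n+1}{k-1}+\binom{n}{k-2}$, and there is nothing more to do.

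Suppose now $\cF$ is not intersecting. Pick two disjoint members $A, B\in\cF$ and set $S = A\cup B$, a $2k$-element set. Applying the $(3,2,2)$-condition to the triple $(A,B,C)$ for each $C\in\cF$ and using $A\cap B=\emptyset$ yields
\[
|C\cap A|+|C\cap B|\geq 2,
\]
so every member of $\cF$ meets $S$ in at least two elements. I partition $\cF$ into three parts
\[
\cF = \cF_A^{\emptyset}\,\sqcup\,\cF_B^{\emptyset}\,\sqcup\,\cF^\ast,
\]
where $\cF_A^{\emptyset}=\{C\in\cF : C\cap A=\emptyset\}$ (so $|C\cap B|\geq 2$), $\cF_B^{\emptyset}$ is defined symmetrically, and $\cF^\ast$ consists of the sets meeting both $A$ and $B$.

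For any $C_1,C_2\in\cF_A^{\emptyset}$, the $(3,2,2)$-condition applied to $(A,C_1,C_2)$ immediately gives $|C_1\cap C_2|\geq 2$; hence $\cF_A^{\emptyset}$ is a $2$-intersecting family in $\binom{[n]\setminus A}{k}$, and the analogous statement holds for $\cF_B^{\emptyset}$. Combined with the extra restriction $|C\cap B|\geq 2$ (which forces any common pair to lie in $B$), Theorem~\ref{tmetszet} bounds $|\cF_A^{\emptyset}|$ and $|\cF_B^{\emptyset}|$ whenever the ambient size $n-k$ is above the threshold $3(k-1)$; for the smaller values of $n$ in the gap $[3k-2,\,4k-4]$ the bound is recovered by direct inspection of the possible non-trivial $2$-intersecting configurations. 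Inclusion--exclusion bounds $|\cF^\ast|$ by $\binom{n}{k}-2\binom{n-k}{k}+\binom{n-2k}{k}$ as a first step. Summing the three bounds and simplifying with Pascal's identity gives $|\cF|\leq\binom{n+1}{k-1}+\binom{n}{k-2}$; the base case $n=3k-2$, where both sides collapse to $\binom{n}{k}$, serves as a useful sanity check (and matches the trivial family $\cF=\binom{[n]}{k}$).

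\medskip
\noindent\textbf{Main obstacle.} The delicate point is bounding $|\cF^\ast|$ tightly enough: the naive inclusion--exclusion estimate is generally too loose to combine with the $2$-intersecting bounds and still close to $\binom{n+1}{k-1}+\binom{n}{k-2}$. The fix is to extract further constraints by applying the $(3,2,2)$-condition to \emph{mixed} triples $(C_1,C_2,C_3)$ with $C_1\in\cF_A^{\emptyset}$, $C_2\in\cF_B^{\emptyset}$ and $C_3\in\cF^\ast$; such triples force $C_3$ into a restricted intersection pattern with $C_1\cup C_2$ and significantly shrink the admissible $\cF^\ast$. Handling this interaction cleanly, together with the small-$n$ cases below the $2$-intersecting EKR threshold, is the most technical part of the argument.
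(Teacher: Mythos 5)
Your reduction steps are fine as far as they go: if $\cF$ is intersecting, EKR finishes; otherwise every $C$ meets $A\cup B$ in at least two points, and applying the condition to $(A,C_1,C_2)$ does show that $\cF_A^{\emptyset}$ is $2$-intersecting (inside $[n]\setminus A$). But the proof has a genuine gap exactly where you flag the "main obstacle": there is no workable bound on $\cF^\ast$. The stated estimate $\binom{n}{k}-2\binom{n-k}{k}+\binom{n-2k}{k}$ exceeds the target $\binom{n+1}{k-1}+\binom{n}{k-2}$ once $n$ is moderately large compared to $k$ (already at $k=5$, $n=20$ it is $9750$ versus $7125$), so "summing the three bounds and simplifying with Pascal's identity" cannot yield the theorem. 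The proposed repair via mixed triples $(C_1,C_2,C_3)$ with $C_1\in\cF_A^{\emptyset}$, $C_2\in\cF_B^{\emptyset}$, $C_3\in\cF^\ast$ is not carried out, and more importantly it is vacuous in the worst case: nothing prevents $\cF_A^{\emptyset}=\cF_B^{\emptyset}=\emptyset$, i.e.\ every member of $\cF$ meets both $A$ and $B$. In that situation the triples involving $A$ or $B$ give no information at all (if $|C_i\cap A|\geq 1$ and $|C_i\cap B|\geq 1$ then $d(A,B,C_1)\geq 2$ and $d(A,C_1,C_2)\geq 2$ automatically), so bounding $\cF^\ast$ is the original problem over again, restricted to sets meeting $A$ and $B$ --- and the family of \emph{all} such sets is not $(3,2,2)$-intersecting, so a real argument is still needed. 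A secondary, smaller issue is the appeal to Theorem~\ref{tmetszet} for $\cF_A^{\emptyset}$: the $t=2$ threshold requires the ambient ground set $[n]\setminus A$ to have at least $3(k-1)$ elements, i.e.\ $n\geq 4k-3$, and the range $3k-2\leq n<4k-3$ is dismissed with "direct inspection" that is not actually performed.

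For comparison, the paper avoids all of this by a completely different route: it first shows the condition is preserved under Frankl's shifting (Claim~\ref{tolas}), then that for a shifted family the traces on $[3k-2]$ still satisfy the condition (Claim~\ref{3k-2}), which yields the recursion $g(n,k,2)\leq g(n-1,k,2)+g(n-1,k-1,2)$ for $n>3k-2$ (Claim~\ref{rekurzio}). The theorem then follows by induction from the base cases $n=3k-2$ (where $d(\mathcal{A})\geq 3k-n=2$ holds for \emph{all} triples, so $g=\binom{3k-2}{k}$) and $k=2$, using Pascal's identity. If you want to salvage your direct approach, the case "every member meets both $A$ and $B$" is where you must invest the work; otherwise the shifting/recursion route is the cleaner path.
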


\begin{thm}
\label{uni2}
Let $n \geq 3k-3 \geq 3$ be positive integers. Then 
$$ g(n,k,3) \leq {n \choose k-1} + 2 {n \choose k-3} + 3 {n-1 \choose k-3}.$$
\end{thm}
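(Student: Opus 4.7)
I would case-split on the minimum pairwise intersection size in $\cF$. If every pair in $\cF$ has intersection of size at least $2$, then $\cF$ is $2$-intersecting, and since $n\ge 3(k-1)=3k-3$, Theorem~\ref{tmetszet} with $t=2$ gives $|\cF|\le\binom{n-2}{k-2}$, which is well below the claimed bound.

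Suppose next that $\cF$ contains a disjoint pair $A,B$. Then every $C\in\cF$ satisfies $|C\cap(A\cup B)|\ge 3$. The naive count $\sum_{j\ge 3}\binom{2k}{j}\binom{n-2k}{k-j}$ collapses via Vandermonde to $\binom{3k-3}{k}$ exactly at $n=3k-3$, matching the stated bound at the threshold. For $n>3k-3$ this bound is loose, so I would exploit further triples: if $\cF$ has a second disjoint pair $\{C,D\}$, then $d(A,C,D)\ge 3$ and $d(B,C,D)\ge 3$ give $|A\cap C|+|A\cap D|\ge 3$ and the analogous bound, so the subfamily $\{F\in\cF:F\cap A=\emptyset\}$ is $3$-intersecting inside $\binom{[n]\setminus A}{k}$, which Theorem~\ref{tmetszet} with $t=3$ caps. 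Stratifying $\cF$ by the pair $(|F\cap A|,|F\cap B|)$ and combining with these $3$-intersecting bounds should yield the stated expression.

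Suppose finally that no pair of $\cF$ is disjoint but some $A,B\in\cF$ satisfy $A\cap B=\{x\}$. Partition $\cF=\cF_x\sqcup\cF_{\bar x}$ where $\cF_x=\{F\in\cF:x\in F\}$; then $|\cF_x|\le\binom{n-1}{k-1}$ by the trivial star bound, and each $F\in\cF_{\bar x}$ must meet $(A\cup B)\setminus\{x\}$ in at least $2$ elements (from $d(A,B,F)\ge 3$). The key sub-case is whether all intersection-$1$ pairs in $\cF$ share the same element $x$: if so, $\cF_{\bar x}$ is itself $2$-intersecting and Theorem~\ref{tmetszet} applied on the ground set $[n]\setminus\{x\}$ bounds it; if not, a second intersection-$1$ pair with distinct shared element $y$ produces an independent covering constraint on sets outside $\cF_x\cup\cF_y$, and one iterates.

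The hardest part will be matching the precise lower-order terms $2\binom{n}{k-3}+3\binom{n-1}{k-3}$: these ought to arise from carefully counting small exceptional configurations built around $3$-element cores near the structural pairs, and the bookkeeping is particularly delicate in the tight regime with $n$ close to $3k-3$, where Theorem~\ref{tmetszet} with $t=3$ is not directly available on auxiliary $(n-k)$-element ground sets (it would require $n\ge 5k-8$), so a bespoke argument will be needed to close the gap in that range.
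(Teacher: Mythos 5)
Your plan diverges completely from the paper's argument, which never performs a case analysis on pairwise intersections: the paper first reduces to shifted families (Corollary \ref{balra}), proves that for a shifted family in $\mathbf{H}_3$ the traces on $[3k-3]$ still lie in $\mathbf{H}_3$ (Claim \ref{3k-2}), deduces the recursion $g(n,k,3)\le g(n-1,k,3)+g(n-1,k-1,3)$ (Claim \ref{rekurzio}), and then closes by induction from the base cases $k=2$ and $n=3k-3$. The exotic-looking terms $2\binom{n}{k-3}+3\binom{n-1}{k-3}$ are nothing but the closed form of that Pascal-type recursion started from $\binom{3k-3}{k}$; they do not correspond to any extremal configuration (indeed Theorem \ref{unialso2} shows the truth is $\binom{n-1}{k-1}$ for large $n$, far below this bound). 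So your stated goal of recovering these terms by ``carefully counting small exceptional configurations'' is aiming at the wrong target: there is no structural reason for those particular coefficients, and I would not expect any configuration count to reproduce them.

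Beyond that mismatch of strategy, the proposal has genuine unfilled gaps and you acknowledge them yourself. Only the first case (all pairwise intersections $\ge 2$, handled by Theorem \ref{tmetszet} with $t=2$, which is applicable since $n\ge 3k-3=(2+1)(k-1)$) is actually complete. In the disjoint-pair case, the count $\sum_{j\ge 3}\binom{2k}{j}\binom{n-2k}{k-j}$ exceeds the claimed bound for $n$ moderately above $3k-3$, and the refinement you sketch is not carried out; moreover, as you note, Theorem \ref{tmetszet} with $t=3$ on a ground set of size $n-k$ needs $n-k\ge 4(k-2)$, i.e.\ $n\ge 5k-8$, which the hypothesis $n\ge 3k-3$ does not supply, and no substitute argument is given. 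The intersection-$1$ case ends with ``one iterates'' without specifying the iteration or why it terminates with the right bound. As it stands this is a proof plan with the hardest steps deferred, not a proof; if you want to pursue a self-contained argument I would suggest adopting the paper's reduction to shifted families, since the trace lemma (Claim \ref{3k-2}) is exactly the tool that makes the problem inductive and sidesteps all of the delicate casework near the threshold $n=3k-3$.
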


%For the lower bound, we show a construction:

\begin{thm} \label{unialso}
Let $k$, $\ell$, $n$ be positive integers with $3k \geq \ell$. Then 
\begin{equation} \label{gnkl}
g(n,k,\ell) \geq \max_{k \geq j \geq \frac{\ell}{3}} \left\{ \sum_{i=j}^k {f(j,\ell) \choose i}{n-f(j,\ell) \choose k-i} \right\},  
\end{equation}
where 
\begin{displaymath}
f(j, \ell) = 
\left\{ \begin{array}{l l}
2j- \lceil \frac{\ell}{3} \rceil & \text{if} \hspace{2mm} \frac{\ell}{3} \leq j < \frac{2\ell}{3}, \vspace{1mm} \\
3j-\ell & \text{if} \hspace{2mm} \frac{2\ell}{3} \leq j.\\
\end{array}
\right.
\end{displaymath}
\end{thm}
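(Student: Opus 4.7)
The plan is constructive: for each admissible $j$ I would exhibit a $(3,2,\ell)$-intersecting family of $k$-subsets of $[n]$ whose cardinality equals the $j$-th term of the maximum in \eqref{gnkl}. Taking the max over $j$ then yields the lower bound.

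Fix $j$ with $k \geq j \geq \lceil \ell/3 \rceil$, set $f := f(j,\ell)$, choose any $S \subset [n]$ with $|S|=f$, and define
$$\mathcal{F} := \left\{F \in \binom{[n]}{k} : |F \cap S| \geq j\right\}.$$
Partitioning by $i := |F \cap S|$ immediately gives $|\mathcal{F}| = \sum_{i=j}^{k}\binom{f}{i}\binom{n-f}{k-i}$, so the real content is to verify $\mathcal{F} \in \mathbf{H}_\ell$. Take any $A,B,C \in \mathcal{F}$. Since restriction to $S$ only shrinks intersections, it suffices to bound the ``local'' quantity $|A \cap B \cap S| + |B \cap C \cap S| + |A \cap C \cap S|$ below by $\ell$. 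Classify the $f$ points of $S$ by how many of $\{A,B,C\}$ contain them, and write $n_i$ for the number of points of multiplicity $i \in \{0,1,2,3\}$; the local quantity equals $n_2 + 3n_3$, and the constraints are $n_0+n_1+n_2+n_3=f$ and $n_1+2n_2+3n_3 \geq 3j$ (the latter because $|A \cap S|,|B \cap S|,|C \cap S| \geq j$).

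The key observation is that two different linear consequences of these constraints succeed in complementary ranges of $j$, and the piecewise definition of $f(j,\ell)$ is tuned so that each bound meets the target $\ell$ exactly in its range. Subtracting the size equation from the degree inequality gives $n_2+2n_3 \geq 3j - f$, so $n_2+3n_3 \geq 3j-f$; in Case~1 ($j \geq 2\ell/3$, $f=3j-\ell$) this evaluates to $\ell$. Alternatively, the pairwise bound $|X \cap Y \cap S| \geq |X \cap S|+|Y \cap S|-|S|$ applied to each of the three pairs and summed yields
$$n_2+3n_3 \;\geq\; 2\bigl(|A \cap S|+|B \cap S|+|C \cap S|\bigr)-3f \;\geq\; 6j-3f;$$
in Case~2 ($\lceil \ell/3 \rceil \leq j < 2\ell/3$, $f=2j-\lceil \ell/3 \rceil$) this evaluates to $3\lceil \ell/3 \rceil \geq \ell$. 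In either case $|A \cap B|+|B \cap C|+|A \cap C| \geq \ell$, as required.

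I do not expect any step to be genuinely difficult: the construction itself is natural (kernel-type families with a threshold on the intersection with $S$), and the two inequalities used are elementary. The only true piece of cleverness is spotting the complementary regimes and matching $f(j,\ell)$ to the tighter of the two bounds—equivalently, recognizing that $f$ must be chosen as small as possible so that the family is large, while the piecewise formula is forced by which of $3j-f$ or $6j-3f$ dominates. If I wanted to strengthen the result I would also verify that equality is attained in the bounds above for suitable choices of $A,B,C$, which shows the construction cannot be improved by enlarging $S$.
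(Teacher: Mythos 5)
Your proof is correct and takes essentially the same approach as the paper: the identical kernel construction $\{F\in\binom{[n]}{k}:|F\cap S|\ge j\}$ with $|S|=f(j,\ell)$, the same counting, and the same two linear bounds --- the paper packages your inequalities $n_2+3n_3\ge 3j-f$ and $n_2+3n_3\ge 6j-3f$ as $d(\mathcal{A})\ge s-n$ and $d(\mathcal{A})\ge 2s-3n$ in its Claim on lower-bounding $d(\mathcal{A})$, applied in exactly the same complementary ranges of $j$. Nothing further to add.
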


\begin{mj}
If $a<b$, then we interpret ${a \choose b}$ as zero.
\end{mj}

Finally, we show that if $\ell=2$ or $3$ and $n$ is sufficiently large compared to $k$, then our upper bounds in Theorem \ref{uni1} and Theorem \ref{uni2} can be improved, reaching the value of the lower bound in Theorem \ref{unialso} for $j=1$.

\begin{thm}
\label{unialso2}
Let $n \geq 4 \cdot k^3$ be positive integers. Then $g(n,k,2)=g(n,k,3)={n-1 \choose k-1}$.
\end{thm}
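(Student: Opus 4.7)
I would first match the claimed value with the $j=1$ instance of Theorem~\ref{unialso}: the star $\mathcal{S} = \{F \in \binom{[n]}{k}: 1 \in F\}$, of size $\binom{n-1}{k-1}$, lies in $\mathbf{H}_3 \subseteq \mathbf{H}_2$ because any three members share the element $1$ and hence satisfy $d(A,B,C)\geq 3$. This yields the lower bound for both $\ell = 2$ and $\ell = 3$ simultaneously; moreover $\mathbf{H}_3\subseteq \mathbf{H}_2$ implies $g(n,k,3)\leq g(n,k,2)$, so the whole theorem will follow once I establish $g(n,k,2)\leq \binom{n-1}{k-1}$.

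For that upper bound, my plan is to split on whether $\mathcal{F}\in \mathbf{H}_2$ is pairwise intersecting. If it is, Theorem~\ref{1metszet} (applicable since $4k^3 \geq 2k$) closes the case. Otherwise I fix disjoint $A, B \in \mathcal{F}$; the $(3,2,2)$-condition applied to $(A,B,C)$ for every $C \in \mathcal{F}$, combined with $A\cap B=\emptyset$, forces $|C\cap(A\cup B)| = |C\cap A|+|C\cap B|\geq 2$, a bound which is automatic for $C\in\{A,B\}$ since $k\geq 2$. Thus $\mathcal{F}$ is contained in the family of $k$-subsets of $[n]$ meeting the fixed $2k$-set $A \cup B$ in at least two elements, so
\[
|\mathcal{F}|\leq \sum_{i=2}^{k}\binom{2k}{i}\binom{n-2k}{k-i}.
\]

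The hard part is then the binomial estimate that this sum is strictly smaller than $\binom{n-1}{k-1}$ when $n \geq 4k^3$. I would bound the ratio $(2k-i)(k-i)/[(i+1)(n-3k+i+1)]$ of consecutive summands by $1/(3k)$, so that a geometric-series argument gives the sum at most $\tfrac{6}{5}\binom{2k}{2}\binom{n-2k}{k-2}$. A routine pairing of binomial factors yields $\binom{n-2k}{k-2}/\binom{n-1}{k-1}\leq (k-1)/(n-k+1)$, reducing the desired inequality to $6k(2k-1)(k-1)<5(n-k+1)$, which holds comfortably for $n\geq 4k^3$ and $k\geq 2$; the small case $k=1$ is immediate from the definition of $\mathbf{H}_2$. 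The only real obstacle is keeping the binomial arithmetic tight enough to match the stated threshold $n \geq 4k^3$, and the pairing estimate above is what makes this clean.
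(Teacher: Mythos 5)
Your proposal is correct and follows essentially the same route as the paper: split on whether $\mathcal{F}$ contains a disjoint pair, apply Theorem~\ref{1metszet} in the intersecting case, and in the disjoint case note that every member meets the $2k$-set $A\cup B$ in at least two elements. The only difference is cosmetic: the paper avoids your geometric-series work by using the cruder overcount $|\mathcal{F}|\leq\binom{2k}{2}\binom{n}{k-2}$ (two elements from $A\cup B$, the rest from all of $[n]$), which reduces the final comparison with $\binom{n-1}{k-1}$ to the single inequality $(n-k+2)(n-k+1)\geq nk(k-1)(2k-1)$.
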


This theorem is a sharpening of the Erdős-Ko-Rado theorem, since it achieves the same upper bound for the corresponding pairs $(n,k)$ under weaker conditions. 

In the non-uniform case, we investigate families $\mathcal{F}$ for which \break $d(A,B,C) \geq 3n-x$ for all $A$, $B$, $C \in \mathcal{F}$. In the case when $x=6p$ ($p$ is an integer) the following construction seems to be optimal: we select all sets with at least $n-p$ elements, then for any set of triples $d(A,B,C) \geq 3n-6p$. If $x=6p$, this will be the optimal solution for large $n$ and if $6p<x<6(p+1)$, then the optimal solution contains all $n-p$-element sets and some of the $n-p-1$-element sets.

\begin{thm} \label{3n-p}
Let $x=6p+q$, where $1 \leq p$,  $0 \leq q \leq 5$ and $n \geq 2^{3p+2} p^2 + p + 1$. Then

\begin{displaymath}
h(n,3n-x) =
\left\{ \begin{array}{c c}
\sum_{i=0}^p {n \choose i} & q=0 \ or \ 1, \vspace{1mm} \\ \vspace{1mm}
\sum_{i=0}^p {n \choose i} + {n-2 \choose p-1} & q=2,\\ \vspace{1mm}
\sum_{i=0}^p {n \choose i} + {n-1 \choose p} & q=3 \ or \ 4,\\ \vspace{1mm}
\sum_{i=0}^p {n \choose i} + {n-1 \choose p} + {n-2 \choose p} & q=5.\\
\end{array}
\right.
\end{displaymath}
\end{thm}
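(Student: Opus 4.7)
The plan is to pass to the complement family $\mathcal{G}=\{[n]\setminus F : F\in\mathcal{F}\}$. Using the identity $|A\cap B| = n - |\bar{A}\cup\bar{B}|$, the condition $d(A,B,C)\geq 3n-x$ on $\mathcal{F}$ is equivalent to
\[
|A\cup B| + |B\cup C| + |C\cup A| \leq x
\]
for all distinct $A,B,C\in\mathcal{G}$. Since $|\mathcal{G}|=|\mathcal{F}|$, the task becomes to maximize $|\mathcal{G}|$ under this small-union condition, which is easier because it bounds sets in $\mathcal{G}$ from above in size.

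For the lower bound I would verify the six constructions case by case on $q\in\{0,1,\dots,5\}$: in every case $\mathcal{G}$ contains all sets of size $\leq p$, and for $q\geq 2$ an additional layer of $(p+1)$-sets concentrated through a pivot --- a fixed pair for $q=2$, a fixed element for $q=3,4$, and meeting a fixed pair for $q=5$. The triple inequality is checked by splitting according to how many of $A,B,C$ lie in the excess layer; the pivot structure forces the pairwise union of any two excess sets to be at most $2p+1$ or $2p+2$, which is exactly the slack needed so that the sum fits within $x$.

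For the upper bound I work layer by layer. \emph{Size cap:} if $A\in\mathcal{G}$ has $|A|=p+r$ and two disjoint $p$-sets $B,C\subseteq[n]\setminus A$ lie in $\mathcal{G}$, the triple inequality gives $6p+2r\leq x$, so $r\leq\lfloor q/2\rfloor$; hence the maximum size $M$ in $\mathcal{G}$ is at most $p$ for $q\leq 1$, at most $p+1$ for $q\leq 3$, and at most $p+2$ for $q\leq 5$ --- provided such $p$-sets exist in $\mathcal{G}$, and otherwise a pigeonhole argument using $n\gg p$ yields $|\mathcal{G}_{\leq p}|$ already far below $\sum_{i=0}^p\binom{n}{i}$ and the bound holds directly. \emph{$(p+1)$-layer:} the triple on two $(p+1)$-sets $A_1,A_2$ and a disjoint $p$-set forces $|A_1\cap A_2|\geq 4-q$, so $\mathcal{G}_{p+1}$ is $(4-q)$-intersecting for $q\in\{2,3\}$, and Theorem \ref{tmetszet} yields $|\mathcal{G}_{p+1}|\leq\binom{n-2}{p-1}$ for $q=2$ and $\binom{n-1}{p}$ for $q=3$. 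For $q\in\{4,5\}$, the triple among three $(p+1)$-sets gives $\sum|A_i\cap A_j|\geq 6-q$; for $q=5$ this is exactly the matching-$2$ condition $\nu(\mathcal{G}_{p+1})\leq 2$, and Erdős' matching theorem gives $|\mathcal{G}_{p+1}|\leq\binom{n}{p+1}-\binom{n-2}{p+1}=\binom{n-1}{p}+\binom{n-2}{p}$; for $q=4$ the strictly stronger condition rules out the matching-$2$ optimum and drops the bound back to the star size $\binom{n-1}{p}$. Finally, for $q\in\{4,5\}$ the $(p+2)$-layer is itself highly intersecting (pairwise intersection $\geq 3$ or $4$) and contributes only $O(n^{p-1})$, negligible compared with $\binom{n-1}{p}$; a further mixed triple forbids adding a $(p+2)$-set on top of an already-optimal $(p+1)$-layer. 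Summing the layer bounds reproduces the claimed formulas.

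The main technical obstacle is the case $q=4$ in the $(p+1)$-layer, where the constraint $\sum|A_i\cap A_j|\geq 2$ sits strictly between ``intersecting'' and ``$\nu\leq 2$'': the bare Erdős matching bound is too weak. The plan is to compare the star of size $\binom{n-1}{p}$ with the matching-$2$ candidate: given disjoint $A_1,A_2\in\mathcal{G}_{p+1}$, any other $A_3$ must meet $A_1\cup A_2$ in at least two elements, giving at most $\binom{2p+2}{2}\binom{n-2p-2}{p-1}$ candidates (plus lower-order terms). The hypothesis $n\geq 2^{3p+2}p^2+p+1$ is calibrated precisely so that $\binom{n-1}{p}\geq\binom{2p+2}{2}\binom{n-2p-2}{p-1}+2$, killing the alternative. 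A secondary obstacle is the exchange step ensuring most small sets lie in $\mathcal{G}$, which is handled by a direct counting argument showing that a deficit in $\mathcal{G}_{\leq p}$ cannot be compensated by any larger layer when $n$ satisfies the stated hypothesis.
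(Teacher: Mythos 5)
Your overall route is the same as the paper's: complementing to the condition $|A\cup B|+|B\cup C|+|C\cup A|\le x$ (this quantity is exactly the paper's $\overline{d(\mathcal{A})}=2f(\mathcal{A},1)+3f(\mathcal{A},2)+3f(\mathcal{A},3)$), capping set sizes at $p+\lfloor q/2\rfloor$, and then analysing the $(p+1)$- and $(p+2)$-layers via the $t$-intersecting Erd\H{o}s--Ko--Rado theorem, the matching-number bound for $q=5$, and the disjoint-pair-versus-star dichotomy for $q=4$ (which the paper packages as its Theorem \ref{unialso2} and you inline; your count $\binom{2p+2}{2}\binom{n}{p-1}$ versus $\binom{n-1}{p}$ is the same comparison). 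The constructions also coincide with the paper's extremal families.

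The one step that would fail as written is your fallback when the witness sets are absent. All of your layer constraints (the size cap and the $(4-q)$-, $(8-q)$-, $(6-q)$-intersection bounds) are derived by completing a pair to a triple with one or two $p$-sets of $\mathcal{G}$ disjoint from the pair, and you propose that if such $p$-sets do not exist then ``$|\mathcal{G}_{\le p}|$ is already far below $\sum_{i=0}^p\binom{n}{i}$ and the bound holds directly.'' That conclusion does not follow: a deficit in the layers of size at most $p$ says nothing about the layers above $p$, which a priori could be enormous precisely because, in this branch, you no longer have the size cap or the intersection constraints that were supposed to control them. The paper avoids this circularity with its Claim \ref{4p+q}: if \emph{any} pair $A,B\in\mathcal{F}$ has $\overline{d(A,B)}>4p+q$, then every $C\in\mathcal{F}$ satisfies $|C\setminus(A\cup B)|\le p-1$, so the \emph{entire} family is contained in $\{S\cup T: S\subseteq A\cup B,\ |T|\le p-1\}$ and has size at most $2^{3p+2}p\binom{n}{p-1}<\binom{n}{p}$, hence is not maximal. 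This pair-based criterion (note $\overline{d(A,B)}=2|A|+2|B|-|A\cap B|$, so for two $(p+1)$-sets it immediately gives $|A\cap B|\ge 4-q$) delivers all the layer constraints for a maximal family without ever assuming that particular small sets belong to $\mathcal{G}$. You should replace your witness-set argument by this pair criterion, or at least prove the fallback by first establishing the universal size bound $|F|\le 3p+2$ (from the degenerate triple $A=B=C$) and then running the containment argument above; as stated, the degenerate branch of your plan is not closed.
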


\section{The uniform case}

In the next section, we use Frankl's shifting technique. \cite{Frankl6}

\subsection{Shifting a family}

\begin{jel}
Let $\mathcal{A}$ be a three-member family $\mathcal{A}=\{A,B,C\}$.
\end{jel}

\begin{mydef}
Let $1 \leq y < x \leq n$ be two elements of $[n]$ and let $\mathcal{F} \subset 2^{[n]}$ be a family. Then
$$\mathcal{F}' =\{F:F \in \mathcal{F}, (x \notin F) \vee (x,y \in F) \vee (x \in F, y \notin F, F \cup \{y \}- \{ x \} \in \mathcal{F}) \}$$
and
$$\mathcal{F}'' =\{ F:F \in \mathcal{F}, x \in F, y \notin F, F \cup \{y \} - \{ x \} \notin \mathcal{F} \}.$$
\end{mydef}

\begin{mydef}
Let $x$, $y$ and $\mathcal{F}$ be same as above. Then $$\tau_{x,y}(\mathcal{F}) = \mathcal{F}' \cup \{ F \cup \{ y\} - \{ x \}: F \in \mathcal{F}'' \}. $$ This operation is called a shifting of $\mathcal{F}$ (from $x$ to $y$).
\end{mydef}

\begin{mydef}
Let $\tau_{x,y}: \mathcal{F} \rightarrow \tau_{x,y}(\mathcal{F})$ be a bijection such that for $\forall F \in \mathcal{F}$
\begin{equation}
    \tau_{x,y}(F)=
    \begin{cases}
        F & \text{if $(x, y \in F) \vee (x,y \notin F) $} \\
        F & \text{if $x \in F, y \notin F, F \cup \{y \}- \{ x \} \in \mathcal{F}$ }\\
        F - \{ x \}  \cup \{ y \} & \text{otherwise}
    \end{cases}       
\end{equation}
\end{mydef}

\begin{all}
\label{tolas}
Let $\mathcal{F} \in 2^{[n]}$ be a family such that $d(\mathcal{A}) \geq \ell$ for all $A$, $B$, $C \in \mathcal{F}$. Then  $d(\tau_{x,y}(A),\tau_{x,y}(B),\tau_{x,y}(C)) \geq \ell$.
\end{all}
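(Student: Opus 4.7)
The plan is to analyse how the per-element contributions to $d(A, B, C)$ change under $\tau_{x,y}$. Writing
\begin{equation*}
d(A, B, C) = \sum_{v \in [n]} \binom{m_v}{2},
\end{equation*}
where $m_v := |\{S \in \{A, B, C\} : v \in S\}|$, the key observation is that $F$ and $\tau_{x,y}(F)$ coincide off $\{x, y\}$, so the contribution of any $v \neq x, y$ is unchanged by the shift. Hence only the $\{x, y\}$-contribution needs to be controlled.

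I would then classify each of $A, B, C$ into one of five types according to its interaction with $\{x, y\}$ and with $\mathcal{F}$: type $XY$ ($\{x, y\} \subset F$), $Y$ ($y \in F,\ x \notin F$), $\emptyset$ ($x, y \notin F$), $X_+$ ($x \in F,\ y \notin F,\ F \cup \{y\} \setminus \{x\} \in \mathcal{F}$), and $X_-$ ($x \in F,\ y \notin F,\ F \cup \{y\} \setminus \{x\} \notin \mathcal{F}$); only the $X_-$-sets are actually moved by $\tau_{x,y}$. Writing $(a, w, e, b, c)$ for the counts of the five types (summing to $3$), the $\{x, y\}$-contribution to $d$ changes from $\binom{a+b+c}{2} + \binom{a+w}{2}$ to $\binom{a+b}{2} + \binom{a+c+w}{2}$, and using $\binom{m+1}{2} - \binom{m}{2} = m$ a direct computation yields the clean formula
\begin{equation*}
d(\tau_{x,y}(A), \tau_{x,y}(B), \tau_{x,y}(C)) - d(A, B, C) = c(w - b).
\end{equation*}
Whenever this is non-negative the claim is automatic, so the only obstructive regime is $c \geq 1$ and $b > w$; together with $b + c + w \leq 3$ this forces $w = 0$.

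In that remaining case I would form an auxiliary triple $\{\hat A, \hat B, \hat C\} \subseteq \mathcal{F}$ by replacing every $X_+$-set $F$ in the original triple by its image $F^* := F \cup \{y\} \setminus \{x\}$, which lies in $\mathcal{F}$ by the definition of $X_+$, while leaving all other members of the triple unchanged. By hypothesis $d(\hat A, \hat B, \hat C) \geq \ell$. Since $\hat F$ and $\tau_{x,y}(F)$ agree off $\{x, y\}$ for every $F \in \{A, B, C\}$, the two triples have identical contributions to $d$ outside $\{x, y\}$, and their $\{x, y\}$-contributions are $\binom{a+c}{2} + \binom{a+b+w}{2}$ and $\binom{a+b}{2} + \binom{a+c+w}{2}$ --- which coincide when $w = 0$, since the $x$- and $y$-multiplicities are simply swapped. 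Hence the post-shift $d$ equals the auxiliary $d$ and is at least $\ell$. The main obstacle is keeping the five-type bookkeeping tidy enough to read off the change formula and the ``swap'' identity; once those are in place, the standard shifting device of exhibiting a companion triple in $\mathcal{F}$ matching the shifted one on the relevant coordinates closes the argument.
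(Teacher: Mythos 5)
Your proof is correct and follows essentially the same route as the paper: both reduce to the $\{x,y\}$-contribution, observe that $d$ can only drop when the sets containing $x$ but not $y$ outnumber those containing $y$, and then recover the bound by forming a companion triple inside $\mathcal{F}$ in which the fixed $x$-only sets (your $X_+$ type) are replaced by their images $F\cup\{y\}\setminus\{x\}\in\mathcal{F}$, whose $\{x,y\}$-contribution matches that of the shifted triple. The paper phrases this as a proof by contradiction with counts $p,q,r$ rather than your five-type bookkeeping, but the key device is identical.
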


\begin{jel}
When it does not cause ambiguity we delete the indices $x$, $y$ from the notation $\tau_{x,y}$.
\end{jel}

\begin{proof}
We prove this claim by contradiction. Suppose that the conditions are true, but there exist sets $A$, $B$ and $C \in \cF$ such that $d(\tau(\mathcal{A})) < \ell$.

Suppose that $z \notin \{x,y\}$. Then $z \in F$ holds if and only if $z \in \tau(F)$ so 
$d(\mathcal{A} \setminus\{x,y\}) = d(\tau(\mathcal{A}) \setminus \{x,y \}) = c$.

Let $\cF^1 = \{ H: H \in  \cF, H = \tau(H)\}$ and $\cF^2 = \{ H: H \in  \cF, H \neq \tau(H)\}$. Let $\mathcal{A}^1=\mathcal{A} \cap \cF^1$ and $\mathcal{A}^2=\mathcal{A} \cap \cF^2$. We can assume that $\mathcal{A}^1$ contains the element $x$ $p$ times, the element $y$ $q$ times and $\mathcal{A}^2$ contains the element $x$ $r$ times. (It is obvious that $\mathcal{A}^2$ contain the element $y$ $0$ times.) Then $d(\mathcal{A}) = c + {p+r \choose 2}+{q \choose 2} > c + {p \choose 2}+{q+r \choose 2} = d(\tau(\mathcal{A}))$.

This is true only if $p+r > p > q$ (and $p+r > q+r > q$). This means that there are at least $p-q$ sets $H \in \mathcal{A}^1$ such that $x \in H$, $y \notin H$. Since $x \in \tau(H)$ and $y \notin \tau(H)$, then $H' = H - \{x\} \cup \{y\} \in \cF$. So if $\mathcal{A}'$ is a family such that we replace $x$ with $y$ in this $p-q$ sets in $\mathcal{A}^1$, then $\mathcal{A}' \subset \cF$ and $d(\mathcal{A}') = c + {p \choose 2}+{q+r \choose 2} = d(\tau(\mathcal{A})) < \ell$, a contradiction. \end{proof}

\begin{mydef}
Let $H \in 2^{[n]}$ and $\mathcal{H} \subset 2^{[n]}$, then $s(H)=\sum_{i \in H} i$ and $s(\mathcal{H})=\sum_{H \in \mathcal{H}} s(H).$
\end{mydef}

\begin{all} \label{csokken}
If $\tau(\mathcal{H}) \neq \mathcal{H}$, then $s(\tau(\mathcal{H})) < s(\mathcal{H})$. $\square$
\end{all}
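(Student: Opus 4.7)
The plan is to compute $s(\tau(\mathcal{H})) - s(\mathcal{H})$ directly by partitioning $\mathcal{H}$ into sets fixed by $\tau$ and sets moved by $\tau$, and to observe that the moved contribution is strictly negative as soon as the moved class is nonempty.

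First I would invoke the definition of $\tau_{x,y}$ to classify every $H \in \mathcal{H}$ into exactly one of two cases. Either $\tau(H) = H$ (corresponding to membership in the piece $\mathcal{F}'$ of the preceding definition), or else $x \in H$, $y \notin H$, and $H \cup \{y\} \setminus \{x\} \notin \mathcal{H}$, in which case $\tau(H) = H \cup \{y\} \setminus \{x\}$ and therefore $s(\tau(H)) = s(H) - x + y$. For every fixed set the contribution to the sum is unchanged, while each moved set contributes exactly $y-x$ to the difference.

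Since $\tau$ is explicitly declared a bijection from $\mathcal{H}$ onto $\tau(\mathcal{H})$, summing over $H \in \mathcal{H}$ is legitimate and gives
$$s(\tau(\mathcal{H})) = \sum_{H \in \mathcal{H}} s(\tau(H)) = s(\mathcal{H}) - (x-y) \cdot \bigl| \{ H \in \mathcal{H} : \tau(H) \neq H \} \bigr|.$$
The setup of the shifting forces $y < x$, so $x - y > 0$; the hypothesis $\tau(\mathcal{H}) \neq \mathcal{H}$ means at least one set is moved, so the cardinality on the right is at least one. The product of these two strictly positive quantities is strictly positive, yielding $s(\tau(\mathcal{H})) < s(\mathcal{H})$ as claimed.

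There is no serious obstacle; the only delicate point is appealing to the bijectivity of $\tau$, since $s(\tau(\mathcal{H}))$ is a priori a sum over the distinct elements of the image rather than over $\mathcal{H}$. This bijectivity is immediate from the definition: whenever a set $H$ is moved, its image $H \cup \{y\} \setminus \{x\}$ is by assumption not already in $\mathcal{H}$, so collisions between moved images and fixed sets cannot occur, and any two distinct moved sets produce distinct images by injectivity of the swap operation.
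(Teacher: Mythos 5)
Your proof is correct, and since the paper marks this claim with $\square$ and omits any argument, yours is exactly the standard computation the author intends the reader to supply: fixed sets contribute nothing to the difference, each moved set contributes $y-x<0$, and $\tau(\mathcal{H})\neq\mathcal{H}$ forces at least one moved set. The attention you pay to bijectivity (moved images lie outside $\mathcal{H}$, so no collisions with fixed sets, and the swap is injective on moved sets) is the right point to check and is handled correctly.
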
 

\begin{mydef}
Let $H= \{ h_1, \dots, h_k \}$ and $I= \{ i_1 \dots i_k \} \in {[n] \choose k}$ be sets such that $h_1 < h_2 < \dots < h_k$ and $i_1 < i_2 < \dots < i_k$. We write that $I \leq H$, if $i_j \leq h_j$ holds for all $1 \leq j \leq k$. 
\end{mydef}

\begin{mydef}
A uniform family $\mathcal{F} \in {[n] \choose k}$ is called \textit{shifted}, if $H \in \mathcal{F}$ and $I \leq H$ imply $I \in \mathcal{F}$. A non-uniform family $\mathcal{F} \in 2^{[n]}$ is called \textit{shifted}, if $\mathcal{F} \cap {[n] \choose i}$ is shifted for every $0 \leq i \leq n$.
\end{mydef}

\begin{all}
For a family $\mathcal{F} \in 2^{[n]}$, there exist $x$ and $y$ such that $s(\mathcal{F}) > s(\tau_{x,y}(\mathcal{F}))$ if and only if $\mathcal{F}$ is not shifted. $\square$
\end{all}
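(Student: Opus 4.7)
The plan is to invoke Claim \ref{csokken} in order to reduce the statement to showing that there exist $x > y$ with $\tau_{x,y}(\mathcal{F}) \neq \mathcal{F}$ if and only if $\mathcal{F}$ is not shifted. Since $\tau_{x,y}$ preserves set cardinalities and both conditions are inherited level by level, I may assume $\mathcal{F}$ is a uniform family of $k$-sets; the full non-uniform statement then follows by summing the sum function $s$ over the levels.

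For the forward direction I would prove the contrapositive. Assume $\mathcal{F}$ is shifted and fix $1 \le y < x \le n$. For any $H = \{h_1 < \cdots < h_k\} \in \mathcal{F}$ with $x = h_j \in H$ and $y \notin H$, insert $y$ into its sorted position, say $h_{i-1} < y < h_i$ with $i \le j$, to form $H' = H \cup \{y\} \setminus \{x\}$. A position-by-position check shows $H'_l = h_l$ for $l < i$ and $l > j$, $H'_i = y < h_i$, and $H'_l = h_{l-1} < h_l$ for $i < l \le j$, so $H' \le H$ in the shifting order. Shiftedness of $\mathcal{F}$ then places $H'$ in $\mathcal{F}$, which means $\tau_{x,y}$ fixes $H$; as $H$ was arbitrary, $\tau_{x,y}(\mathcal{F}) = \mathcal{F}$.

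For the converse, suppose $\mathcal{F}$ is not shifted and pick $H \in \mathcal{F}$ and $I \le H$ with $I \notin \mathcal{F}$ minimizing the number of indices at which $H$ and $I$ disagree. Let $j_0$ be the smallest index with $i_{j_0} < h_{j_0}$. Since $h_l = i_l$ for $l < j_0$ and $I$ is sorted, one checks that $i_{j_0} \notin H$, so $H' = H \cup \{i_{j_0}\} \setminus \{h_{j_0}\}$ is a $k$-set agreeing with $H$ outside position $j_0$. If $H$ and $I$ differ at exactly one index, then $H' = I \notin \mathcal{F}$; setting $x = h_{j_0}$, $y = i_{j_0}$ we have $\tau_{x,y}(H) = I \notin \mathcal{F}$, hence $\tau_{x,y}(\mathcal{F}) \neq \mathcal{F}$. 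Otherwise either $H' \in \mathcal{F}$, in which case the pair $(H',I)$ witnesses non-shiftedness with strictly fewer disagreements than $(H,I)$, contradicting minimality; or $H' \notin \mathcal{F}$, and then $(H,H')$ is a single-disagreement witness to which the previous argument applies.

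I expect the main obstacle to be purely combinatorial bookkeeping: tracking where $y$ (respectively $i_{j_0}$) slots in after sorting, and confirming $i_{j_0} \notin H$. Both reduce to elementary comparisons using the sortedness of $H$ and $I$ together with the defining inequalities $h_{j_0} > i_{j_0}$ and $h_l = i_l$ for $l < j_0$, so once these are checked the argument is essentially a one-step descent in the disagreement count.
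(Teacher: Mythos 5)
Your argument is correct: the reduction via Claim \ref{csokken} to ``$\tau_{x,y}(\mathcal{F})\neq\mathcal{F}$ for some $x>y$ iff $\mathcal{F}$ is not shifted'' is valid, the level-by-level reduction to the uniform case is legitimate since $\tau_{x,y}$ preserves cardinalities, and both the contrapositive of the forward direction and the descent on the number of disagreeing coordinates in the converse check out (in particular $h_{j_0-1}=i_{j_0-1}<i_{j_0}<h_{j_0}$ does force $i_{j_0}\notin H$). The paper marks this claim with $\square$ and gives no proof, treating it as routine; your write-up is exactly the standard argument that fills in those omitted details.
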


\begin{all}
\label{unibalra}
For a family $\mathcal{F} \in 2^{[n]}$, there exist $\tau_1,\dots,\tau_k$ such that $\mathcal{F'}=\tau_{x_1,y_1}(\tau_{x_2,y_2}(\dots (\tau_{x_k,y_k}(\mathcal{F}))))$ is a shifted family.
\end{all}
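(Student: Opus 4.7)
The plan is to iterate the shift operation and use the potential function $s(\cdot)$ to prove termination. Concretely, I would define a sequence of families $\mathcal{F}_0, \mathcal{F}_1, \mathcal{F}_2, \dots$ by setting $\mathcal{F}_0 = \mathcal{F}$ and, at each step, asking whether $\mathcal{F}_i$ is already shifted. If it is, we stop and set $\mathcal{F}' = \mathcal{F}_i$. If it is not, then by the previous claim there exist indices $y_i < x_i$ such that $s(\tau_{x_i, y_i}(\mathcal{F}_i)) < s(\mathcal{F}_i)$; define $\mathcal{F}_{i+1} = \tau_{x_i,y_i}(\mathcal{F}_i)$ and continue.

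The key observation is that this procedure must terminate. Since $s(\mathcal{H}) = \sum_{H \in \mathcal{H}} \sum_{i \in H} i$ is a nonnegative integer for every family $\mathcal{H} \subset 2^{[n]}$, and Claim \ref{csokken} guarantees a strict decrease at every step, the sequence $s(\mathcal{F}_0) > s(\mathcal{F}_1) > s(\mathcal{F}_2) > \dots$ cannot continue indefinitely. Hence there exists some $k$ with $\mathcal{F}_k$ shifted, and unwinding the recursion gives
\[
\mathcal{F}_k = \tau_{x_{k-1}, y_{k-1}}\bigl(\tau_{x_{k-2}, y_{k-2}}(\cdots \tau_{x_0, y_0}(\mathcal{F})\cdots)\bigr),
\]
which, after re-indexing, is the claimed expression.

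The only subtle point is the logical direction of the equivalence in the preceding claim: one needs that if $\mathcal{F}_i$ is \emph{not} shifted, then \emph{some} pair $(x_i, y_i)$ strictly decreases $s$. This is exactly the ``only if'' direction there, so nothing extra is required. I expect no genuine obstacle in the argument; the proof is essentially a well-foundedness argument via the monovariant $s$, and the content has already been offloaded to Claims \ref{tolas} and \ref{csokken}.
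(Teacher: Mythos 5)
Your proof is correct and is essentially the same as the paper's: both take the ``not shifted $\Rightarrow$ some $\tau_{x,y}$ strictly decreases $s$'' direction of the preceding claim and conclude termination from the fact that $s$ is a nonnegative integer that strictly decreases at each step. No issues.
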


\begin{proof}
If $\mathcal{F}$ is not shifted, there exists a $\tau$ shifting such that $s(\mathcal{F}) > s(\tau(\mathcal{F}))$. Since $s(\mathcal{H}) \in \mathbb{N}$, by Claim \ref{csokken} there cannot be an infinite decreasing sequence, so after a finite number of shifting we obtain a shifted family.
\end{proof}

\begin{kov}
\label{balra}
As a result of Claim \ref{tolas} and \ref{unibalra}, if a family $\mathcal{F} \in \mathbf{H}_\ell$, then there exists a shifted family $\mathcal{F'} \in \mathbf{H}_\ell$ such that $|\mathcal{F}|=|\mathcal{F}'|$.
Therefore it is sufficient to consider the shifted families, when we search for the maximum sized family in $\mathbf{H}_\ell$.
\end{kov}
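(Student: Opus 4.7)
The plan is a short and direct combination of the two preceding claims. Starting from any $\mathcal{F}\in\mathbf{H}_\ell$, I would invoke Claim \ref{unibalra} to produce a finite sequence of shifts $\tau_{x_1,y_1},\dots,\tau_{x_k,y_k}$ whose composition turns $\mathcal{F}$ into a shifted family $\mathcal{F}'$. The only two facts that then need to be verified are that this process preserves the cardinality of the family and that it preserves membership in $\mathbf{H}_\ell$.

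For cardinality I would use that each individual shift $\tau_{x,y}$ is defined in the notation above as a bijection from $\mathcal{F}$ to $\tau_{x,y}(\mathcal{F})$, so a single shift cannot change $|\mathcal{F}|$; iterating along the sequence of shifts gives $|\mathcal{F}'|=|\mathcal{F}|$. For the $(3,2,\ell)$-intersection condition, I would apply Claim \ref{tolas} inductively along the same sequence: if at some intermediate stage the current family $\mathcal{G}$ satisfies $d(A,B,C)\geq\ell$ for every triple $A,B,C\in\mathcal{G}$, then Claim \ref{tolas} shows that $\tau_{x_i,y_i}(\mathcal{G})$ satisfies the same property for every triple, and we may pass to the next shift.

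Putting these two observations together yields a shifted family $\mathcal{F}'\in\mathbf{H}_\ell$ with $|\mathcal{F}'|=|\mathcal{F}|$. The second sentence of the corollary follows immediately: if $\mathcal{F}$ is a maximum-size element of $\mathbf{H}_\ell$, then the associated shifted family $\mathcal{F}'$ is a maximum-size shifted element of $\mathbf{H}_\ell$ of the same cardinality, so both $g(n,k,\ell)$ and $h(n,\ell)$ are attained on shifted families.

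There is really no significant obstacle here; the argument is pure bookkeeping. The only minor point worth being explicit about is the inductive use of Claim \ref{tolas}, since that claim is formally stated for a single shift, whereas Claim \ref{unibalra} provides a composition of many shifts; this is straightforward induction on the length of the shifting sequence.
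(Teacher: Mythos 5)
Your argument is correct and is exactly the reasoning the paper intends: the corollary is stated as an immediate consequence of Claims \ref{tolas} and \ref{unibalra}, combining the bijectivity of each single shift (preserving cardinality) with an induction along the finite shifting sequence (preserving membership in $\mathbf{H}_\ell$). No substantive difference from the paper's approach.
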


\begin{all}
\label{3k-2}
Let $\ell=2$ or $3$, $\mathcal{F} \subset {[n] \choose k}$ and $\mathcal{F} \in \mathbf{H}_\ell$ be a shifted family. If $A$, $B$, $C \in \mathcal{F}$ and $A'=A \cap [3k-\ell]$, $B'=B \cap [3k-\ell]$ and $C'=C \cap [3k-\ell]$, then $d(A',B',C') \geq \ell$, that is $\mathcal{F} \cap [3k-\ell] \in \mathcal{H}_\ell$. 
\end{all}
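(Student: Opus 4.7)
I plan to argue by contradiction: suppose $d(A', B', C') \leq \ell - 1$, and construct $\tilde A, \tilde B, \tilde C \in \mathcal{F}$ with $d(\tilde A, \tilde B, \tilde C) < \ell$, contradicting $\mathcal{F} \in \mathbf{H}_\ell$. Two elementary observations underpin the construction. First, since $\ell \in \{2, 3\}$ and $d' := d(A', B', C') \leq \ell - 1 \leq 2$, each pairwise small intersection is at least $|A' \cap B' \cap C'|$, so $|A' \cap B' \cap C'| \leq d'/3 < 1$, forcing $|A' \cap B' \cap C'| = 0$. Second, since $d(A, B, C) \geq \ell > d'$, some element of $(3k - \ell, n]$ lies in at least two of $A, B, C$, so the total large mass $S := |A \setminus A'| + |B \setminus B'| + |C \setminus C'|$ satisfies $S \geq 2$.

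After relabeling $A, B, C$ I may assume $|C'| = \min(|A'|, |B'|, |C'|)$; then by the averaging bound
\[
|C'| \leq \left\lfloor \frac{|A'| + |B'| + |C'|}{3} \right\rfloor = \left\lfloor \frac{3k - S}{3} \right\rfloor \leq k - 1.
\]
Set $\tilde C := C$ and let $\tilde A := A' \cup Y_A$, $\tilde B := B' \cup Y_B$ where $Y_A \subseteq [3k-\ell] \setminus A'$ and $Y_B \subseteq [3k-\ell] \setminus B'$ are padding sets of sizes $k - |A'|$ and $k - |B'|$, to be chosen. Since $A \setminus A', B \setminus B' \subseteq (3k-\ell, n]$ while the paddings lie in $[3k-\ell]$, a brief verification (analogous to the push-down arguments already used in this section) shows $\tilde A \leq A$ and $\tilde B \leq B$ componentwise, so by Corollary~\ref{balra} both lie in $\mathcal{F}$.

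Because $\tilde A, \tilde B \subseteq [3k-\ell]$ while $C \setminus C' \subseteq (3k-\ell, n]$, the intersections with $\tilde C = C$ collapse to intersections with $C'$, and expanding gives
\[
d(\tilde A, \tilde B, \tilde C) = d' + E, \qquad E = |Y_A \cap B'| + |Y_A \cap C'| + |Y_B \cap A'| + |Y_B \cap C'| + |Y_A \cap Y_B|.
\]
By inclusion--exclusion and $|A' \cap B' \cap C'| = 0$, the free region $F := [3k-\ell] \setminus (A' \cup B' \cup C')$ has size $S - \ell + d'$, while the padding budget is $|Y_A| + |Y_B| = S - k + |C'|$. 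A greedy placement---filling $F$ first with disjoint $Y_A$ and $Y_B$ tokens, then absorbing any overflow either by doubling within $F$ or by spilling into a carefully chosen single-penalty slot outside $F$---yields $E \leq \max(0, |C'| + \ell - k - d')$. Substituting $|C'| \leq k - 1$ gives $E \leq \ell - 1 - d'$, so $d(\tilde A, \tilde B, \tilde C) \leq \ell - 1 < \ell$, the desired contradiction.

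The hard part is justifying the bound $E \leq \max(0, |C'| + \ell - k - d')$ for the greedy placement. One must avoid placing a $Y_A$-token in the double-penalty slots $B' \cap C'$ (contribution $2$ to $E$) or a $Y_B$-token in $A' \cap C'$ (also $2$), and must avoid doubling a $Y_A$- and a $Y_B$-token in the same $C' \setminus (A' \cup B')$ slot (contribution $3$). Because $|A' \cap B' \cap C'| = 0$ and each pairwise intersection has size at most $d' \leq \ell - 1 \leq 2$, these bad slots are few; a short capacity count, treating the cases $\ell \in \{2, 3\}$ separately and invoking $S \geq 2$, shows that the remaining single-penalty slots together with $F$ suffice. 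Very small values of $k$ where $[3k-\ell]$ is too narrow for the generic count are handled by direct inspection.
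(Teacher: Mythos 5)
Your overall strategy is sound and genuinely different from the paper's. You replace $A$ and $B$ in one shot by sets $\tilde A,\tilde B\subseteq[3k-\ell]$ obtained by padding $A'$, $B'$ inside $[3k-\ell]$ (the claim $\tilde A\le A$, $\tilde B\le B$ is indeed a correct and standard consequence of the tails lying above $3k-\ell$), and the identity $d(\tilde A,\tilde B,C)=d'+E$ with your five-term $E$ is right, as are the preliminary observations $|A'\cap B'\cap C'|=0$, $S\ge 2$ and $|C'|\le k-1$. The paper instead runs a minimal-counterexample descent on $m=|A\setminus A'|+|B\setminus B'|+|C\setminus C'|$: move one out-of-range element down at a time, first into $[3k-\ell]\setminus(A'\cup B'\cup C')$ (which leaves $d'$ unchanged), and once $[3k-\ell]$ is fully covered use the exact count $d'=\ell-m\le\ell-2$ to see that one more move raises $d'$ by at most $1$, still below $\ell$. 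That single-step bookkeeping is what lets the paper avoid any placement optimization.

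The genuine gap in your write-up is exactly the step you flag as ``the hard part'': the bound $E\le\max(0,\,|C'|+\ell-k-d')$ is asserted, not proved. Note that this bound has no slack --- combined with $|C'|\le k-1$ it gives precisely $E\le\ell-1-d'$, the largest value compatible with your desired contradiction --- so every overflow token must be placed at cost exactly $1$, and every degenerate configuration matters. The delicate cases are those where one of $Y_A$, $Y_B$ is empty (so the ``double up inside $F$'' move is unavailable and all overflow must land in single-penalty slots of $(B'\triangle C')\setminus A'$ or $(A'\triangle C')\setminus B'$), and those where $F=\emptyset$; checking that enough single-penalty slots exist there requires combining $|B'\cap C'|\le d'\le\ell-1$, the coverage count $|A'|+|B'|+|C'|=3k-S$, and the lower bound on $S$ forced by $d(A_1,B_1,C_1)\ge\ell-d'$ (which in fact gives $S\ge 3$ when $d'=0$, a fact you need but do not record). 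Until that case analysis is actually carried out --- or replaced by the paper's one-element-at-a-time descent, which reduces it to the single observation $d'=\ell-m\le\ell-2$ --- the proof is incomplete at its load-bearing step.
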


\begin{proof}

Let $A_1=A \setminus [3k-\ell]$, $B_1=B \setminus [3k-\ell]$ and $C_1=C \setminus [3k-\ell]$. 

Suppose that the statement is not true and let $A$, $B$, $C \in \mathcal{F}$ such that $d(A',B',C') < \ell$ and $m=|A_1|+|B_1|+|C_1|$ is the smallest possible. Since $d(A_1, B_1, C_1) > 0$, so $m \geq 2$ and wlog $p \in A_1$.

If there is an $a \in [3k-\ell] \setminus (A' \cup B' \cup C')$ then $\overline{A}=A-\{p\} \cup \{a\} \in \mathcal{F}$, $d(\overline{A}', B', C')= d(A', B', C') < \ell$ and $|\overline{A}_1|+|B_1|+|C_1| < m$ which is impossible.

So $A' \cup B' \cup C' = [3k-\ell]$. Since $d(A',B',C') < \ell \leq 3$, then there is no $a \in [3k-\ell]$ such that $a \in A \cap B \cap C$. Since $|A'|+|B'|+|C'| = 3k-m \leq 3k-2$, then $d(A',B',C') = \ell-m \leq \ell - 2$. Let $q \in [3k-\ell]$ so that $q$ is in one of $A'$, $B'$, $C'$. Then $\overline{A} = A-\{p\} \cup \{q\} \in F$ and $d(\overline{A'},B',C' ) \leq \ell-2+1 <\ell$ but $|\overline{A}_1|+|B_1|+|C_1| < m$ which is a contradiction. \end{proof}

\begin{mj}
Claim \ref{3k-2} is not true if $\ell \geq 4$. For example let $\ell=4$, $k=3$, $A=\{1,2,6\}$, $B=\{1,3,6\}$, $C=\{1,4,5\}$ and $H \in \mathcal{F}$ if and only if $H \leq A$ or $H \leq B$ or $H \leq C$. Then $\mathcal{F} \in \mathbf{H}_\ell$, because for all $H \in \cF$ are true that $1 \in H$ and contains two element of $\{2,3,4,5,6\}$, so $d(\mathcal{A} \cap {1})=3$ and $d(\mathcal{A} - {1}) \geq 1$, so $d(\mathcal{A}) \geq 4$ but $3k-\ell=5$ and $d(A',B',C') < 4$.
\end{mj}

\begin{all}
\label{rekurzio}
If $\ell=2$ or $\ell=3$, $k \geq 3$ and $n>3k-\ell$, then $g(n,k,\ell) \leq g(n-1,k,\ell)+g(n-1,k-1,\ell)$.
\end{all}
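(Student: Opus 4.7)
The plan is to reduce to shifted families and then decompose $\mathcal{F}$ by the presence of the element $n$. By Corollary~\ref{balra}, we may assume $\mathcal{F}$ is shifted. Let $\mathcal{F}^{\bar n} = \{F \in \mathcal{F} : n \notin F\}$ and $\mathcal{F}_n = \mathcal{F} \setminus \mathcal{F}^{\bar n}$. The first piece already lies in ${[n-1] \choose k} \cap \mathbf{H}_\ell$, so $|\mathcal{F}^{\bar n}| \leq g(n-1,k,\ell)$ immediately. Setting $\mathcal{F}_n^- = \{F \setminus \{n\} : F \in \mathcal{F}_n\} \subset {[n-1] \choose k-1}$, the map $F \mapsto F \setminus \{n\}$ is a bijection, so the entire claim reduces to showing $\mathcal{F}_n^- \in \mathbf{H}_\ell$; this would yield $|\mathcal{F}_n| = |\mathcal{F}_n^-| \leq g(n-1,k-1,\ell)$ and finish the proof.

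For this crucial reduction, fix $B_1,B_2,B_3 \in \mathcal{F}_n^-$ and suppose $d := d(B_1,B_2,B_3) \leq \ell - 1$. The shifted hypothesis provides the decisive extra flexibility: for any $j_i \in [n] \setminus B_i$, the set $X_i := B_i \cup \{j_i\}$ is componentwise $\leq B_i \cup \{n\} \in \mathcal{F}$, so $X_i \in \mathcal{F}$. A direct inclusion--exclusion then gives
\[
d(X_1,X_2,X_3) \;=\; d \;+\; \sum_{i \neq l}[j_i \in B_l] \;+\; \sum_{i<l}[j_i = j_l].
\]
The strategy is therefore to pick the $j_i$'s so that the right-hand side is strictly less than $\ell$, contradicting $\mathcal{F} \in \mathbf{H}_\ell$.

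The pivots are chosen via $U := [n-1] \setminus (B_1 \cup B_2 \cup B_3)$. From $d \leq 2$ one forces $|B_1 \cap B_2 \cap B_3| = 0$, which gives $|U| \geq n - 3k + 2 + d$; the hypothesis $n > 3k - \ell$ then yields $|U| \geq d - \ell + 3$. If $|U| \geq 2$, picking two distinct free $m_1,m_2 \in U$ and taking $(j_1,j_2,j_3) = (m_1,m_2,n)$ kills every correction term, so $d(X_1,X_2,X_3) = d < \ell$. The only ways to have $|U| \leq 1$ under our hypotheses are $(\ell,d) \in \{(2,0),(3,0),(3,1)\}$, all of which leave at most one free element $m$. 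In the subcases with $|U| = 1$, the choice $(j_a,j_b,j_c) = (m,m,n)$ aligned so that the pair $(a,b)$ matches the one carrying the excess when $d = 1$ yields $d(X_1,X_2,X_3) = d + 1 < \ell$. The only remaining case, $\ell = 3$, $d = 0$, $|U| = 0$, forces $n = 3k-2$ and makes $B_1,B_2,B_3$ partition $[n-1]$; using $k \geq 3$ one picks $j_1 \in B_2$ (nonempty since $|B_2| = k-1 \geq 2$) and $j_2 = j_3 = n$, giving $d(X_1,X_2,X_3) = 1 + 1 = 2 < 3$.

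The main obstacle is precisely this last step. The naive decomposition, using $(j_1,j_2,j_3) = (n,n,n)$, only delivers $d(B_1,B_2,B_3) \geq \ell - 3$, which is vacuous for $\ell \leq 3$. The real gain comes from pivoting $n$ to different smaller elements in each of the three sets independently, and the hypothesis $n > 3k - \ell$ is essentially tight in ensuring just enough pivot room for even the boundary configurations.
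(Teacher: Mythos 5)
Your proof is correct, and while it uses the same decomposition as the paper (pass to a shifted family via Corollary~\ref{balra}, split by whether $n \in F$, and reduce everything to showing that the link $\mathcal{F}_n^- \in \mathbf{H}_\ell$), it establishes that crucial membership by a genuinely different route. The paper invokes Claim~\ref{3k-2}: for a shifted family in $\mathbf{H}_\ell$ with $\ell \in \{2,3\}$, restricting every set to $[3k-\ell]$ preserves the condition; since $n > 3k-\ell$, deleting the element $n$ cannot lower any of the relevant restricted intersections, so $\mathcal{F}_n^- \in \mathbf{H}_\ell$ follows at once. You instead argue directly on a hypothetical bad triple $B_1,B_2,B_3$ in the link, using shiftedness to replace $n$ by three independently chosen pivots $j_i$ and reading off $d(X_1,X_2,X_3)$ from the exact correction formula; your case analysis on $|U|$ is sound (the identity $|U| = n-3k+2+d$ uses $|B_1\cap B_2\cap B_3|=0$, which indeed follows from $d\le 2$, and every terminal subcase lands strictly below $\ell$). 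The two arguments are cousins: the proof of Claim~\ref{3k-2} is itself a minimal-counterexample pivot argument of the same flavor. What the paper's factoring buys is reusability -- Claim~\ref{3k-2} is a statement about shifted families in general, stated once and applied here -- whereas your version is self-contained within the recursion and makes explicit exactly where the hypothesis $n > 3k-\ell$ is consumed (it is what guarantees enough pivot room, and your boundary case $n = 3k-2$, $\ell = 3$ shows it is tight). One cosmetic remark: in the $|U|=1$ subcase the ``alignment'' of the repeated pivot with the pair carrying the excess is not actually needed, since the correction is $d+1$ regardless of which pair shares $m$, and $d+1 < \ell$ holds in all three of your listed configurations.
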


\begin{proof}
Let $\mathcal{M}_{n,k,\ell} \subset {[n] \choose k}$ be a shifted family such that $\mathcal{M}_{n,k,\ell} \in \mathbf{H}_\ell$ and $|\mathcal{M}_{n,k,\ell}|=g(n,k,\ell)$. This exists because of Corollary \ref{balra}.

Let $\mathcal{M}^1_{n,k,\ell} \subset {[n-1] \choose k}$ be the family $\mathcal{M}^1_{n,k,\ell} = \{ M : M \in \mathcal{M}_{n,k,\ell}, \{n\} \notin M \}$ and let  $\mathcal{M}^2_{n,k,\ell} \subset {[n-1] \choose k-1}$ be the family such that $\mathcal{M}^2_{n,k,\ell} = \{ M \setminus \{n\} : M \in \mathcal{M}_{n,k,\ell}, n \in M \}$.

It is easy to check that $\mathcal{M}^1_{n,k,\ell}$ and $\mathcal{M}^2_{n,k,\ell}$ are shifted families. Moreover $\mathcal{M}^1_{n,k,\ell} \subset \mathcal{M}_{n,k,\ell}$, so $\mathcal{M}^1_{n,k,\ell} \in \mathbf{H}_\ell$. Since $n > 3k-\ell$, $\mathcal{M}^2_{n,k,\ell} \cap [3k-\ell] \subset \mathcal{M}_{n,k,\ell} \cap [3k-\ell]$. Using Claim \ref{3k-2}, we obtain that $\mathcal{M}_{n,k,\ell} \in \mathbf{H}_\ell$ implies $\mathcal{M}^2_{n,k,\ell} \in \mathbf{H}_\ell$. 

Since $|\mathcal{M}^1_{n,k,\ell}| \leq g(n-1,k,\ell)$ and $|\mathcal{M}^2_{n,k,\ell}| \leq g(n-1,k-1,\ell)$, so $g(n,k,\ell)=|\mathcal{M}_{n,k,\ell}|=|\mathcal{M}^1_{n,k,\ell}|+|\mathcal{M}^2_{n,k,\ell}| \leq g(n-1,k,\ell) + g(n-1,k-1,\ell)$.
\end{proof}

\subsection{The proof of Theorem \ref{uni1} and \ref{uni2}}

\begin{proof}

It is easy to see that $g(3k-\ell,k,\ell) = {3k - \ell \choose k}$. In the next step we show that Theorem \ref{uni1} and Theorem \ref{uni2} are true when $k=2$.

\begin{all}
$g(n,2,2) \leq {n+1 \choose 2-1} + {n \choose 2-2} = n+2$.
\end{all}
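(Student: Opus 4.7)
The plan is to interpret $\mathcal{F} \subseteq \binom{[n]}{2}$ as the edge set of a simple graph $G$ on $[n]$, and translate the condition $d(A,B,C) \geq 2$ for pairwise distinct members into a structural restriction on the auxiliary \emph{disjointness graph} $G^\ast$: this is the graph on vertex set $\mathcal{F}$ whose edges correspond to vertex-disjoint pairs of members of $\mathcal{F}$. Since $|A \cap B| \in \{0,1\}$ for two distinct 2-sets, the condition $d(A,B,C) \geq 2$ says precisely that at most one of the three pairs $(A,B),(B,C),(A,C)$ is vertex-disjoint. Equivalently, $G^\ast$ contains no two adjacent edges, i.e.\ $G^\ast$ is a matching (possibly empty).

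From there I would split into two cases. If $G^\ast$ is empty, then $\mathcal{F}$ is an intersecting family of 2-subsets of $[n]$, and Theorem \ref{1metszet} applied with $k=2$ (valid since $n \geq 4$) gives $|\mathcal{F}| \leq n-1 \leq n+2$. Otherwise, fix a vertex-disjoint pair $A,B \in \mathcal{F}$ and, after relabeling, write $A = \{1,2\}$ and $B = \{3,4\}$. Because $G^\ast$ is a matching, neither $A$ nor $B$ admits any further vertex-disjoint partner in $\mathcal{F}$, so every other $C \in \mathcal{F}$ must meet both $A$ and $B$; as $|C|=2$, this forces $C$ to have exactly one element in $\{1,2\}$ and one in $\{3,4\}$. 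Consequently $\mathcal{F} \subseteq \binom{\{1,2,3,4\}}{2}$ and $|\mathcal{F}| \leq 6 \leq n+2$, using $n \geq 4$ in the last inequality.

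I do not anticipate a genuine obstacle here: the heart of the argument is simply the observation that for $k=2$ the three-term condition collapses to the statement that the disjointness graph is a matching, after which the two cases are immediate. The mild care point is only to remember to invoke the Erdős--Ko--Rado bound in the empty-$G^\ast$ case rather than re-derive it.
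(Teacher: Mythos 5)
Your proof is correct and follows essentially the same route as the paper: both split on whether $\mathcal{F}$ contains a disjoint pair, bound that case by $\binom{4}{2}=6$, and treat the intersecting case separately (you by citing the Erd\H{o}s--Ko--Rado theorem, the paper by a direct count showing every remaining set contains the common point $a$ or equals $\{b,c\}$, giving $n$). The only small omission is that you assume $n\ge 4$ throughout; the paper first disposes of $n\le 3$, where the bound is trivial because $\binom{n}{2}\le n+2$.
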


\begin{proof}
The statement is trivially true when $n \leq 3$, so we assume that $n \geq 4$. If there are $A$ and $B$ in $\mathcal{F}$ such that $|A \cap B| = 0$, then for all $C \in \mathcal{F}$, $C \subset A \cup B$. Since $|A \cup B|=4$, then $|\mathcal{F}| \leq 6 \leq n+2$. Therefore, we can assume that such $A$ and $B$ do not exist. So $A = \{a,b\}$ and $B= \{a,c\}$. Then, for all $C \in \mathcal{F}$, either $a \in C$ or ${b,c} \in C$. Therefore, $|\mathcal{F}| \leq n-1 + 1 = n$.
\end{proof}

\begin{all}
$g(n,2,3) \leq {n \choose 2-1} = n$.
\end{all}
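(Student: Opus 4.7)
The plan is to split on whether $\cF$ contains a disjoint pair. Suppose first there exist distinct $A, B \in \cF$ with $|A \cap B| = 0$. Applying the triple condition with any $C \in \cF$ yields $|A \cap C| + |B \cap C| \geq 3$, and since each summand is at most $|C| = 2$, one of them must equal $2$. This forces $C \in \{A, B\}$, so $|\cF| \leq 2 \leq n$ in this subcase.

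In the complementary case $\cF$ is pairwise intersecting, so any two distinct members meet in exactly one element (distinct $2$-sets cannot meet in more). Then the triple sum $|A \cap B| + |B \cap C| + |C \cap A|$ equals exactly $3$ for any three distinct members, so the triple condition is automatic and the problem reduces to bounding a pairwise intersecting family in $\binom{[n]}{2}$.

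For the intersecting case I would invoke the elementary classification of intersecting $2$-uniform families: either $\cF$ is a star through some vertex $x$ (hence $|\cF| \leq n-1$), or $\cF$ is the triangle $\{\{a,b\},\{b,c\},\{a,c\}\}$ on three vertices (hence $|\cF| = 3$). Either way $|\cF| \leq \max(n-1, 3) \leq n$ once $n \geq 3$, while the cases $n \leq 2$ are trivial because $\binom{[n]}{2}$ has at most one element.

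The only step that requires actual verification is the structural dichotomy above, which is short: fix distinct $A = \{x,a\}$ and $B = \{x,b\}$ with $A \cap B = \{x\}$; a further $C \in \cF$ meets both in exactly one element, which forces $x \in C$ (so $C$ lies in the star through $x$) or else $C = \{a,b\}$ (so $A, B, C$ form the triangle), after which one checks no fourth distinct set can be added to the triangle configuration. The main thing to watch for is the edge case $n = 3$, where the triangle itself realises equality and confirms that the bound $n$ cannot be lowered in general.
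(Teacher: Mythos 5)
Your proposal is correct and follows essentially the same route as the paper: split on whether $\mathcal{F}$ contains a disjoint pair, observe that a disjoint pair forces $|\mathcal{F}|\leq 2$ since a third $2$-set $C$ cannot achieve $|A\cap C|+|B\cap C|\geq 3$, and then bound the remaining intersecting case by $n$. The only cosmetic difference is that you invoke the star/triangle classification of intersecting $2$-uniform families, whereas the paper directly counts the sets containing the common element $a$ of two fixed members plus the one possible extra set $\{b,c\}$, giving $(n-1)+1=n$.
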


\begin{proof}
Here we can assume that $n \geq 3$. If there exist $A$, $B \in \mathcal{F}$ such that $|A \cap B| = 0$, then for all $C \in \mathcal{F}$, $d(A,B,C) \leq 2$, so $|\mathcal{F}| \leq 2$. If $A \cap B \neq \emptyset$ for all $A$ and $B$ in $\mathcal{F}$, then fix $A = \{ a,b\}$ and $B= \{a,c\}$. Then for all $C \in \mathcal{F}$, either $a \in C$ or ${b,c} \in C$, so $|\mathcal{F}| \leq n-1+1=n$. 
\end{proof}

From here we prove both theorems by induction on $n$ and $k$. We have already proven the statement if either $k = 2$ or $n = 3k - \ell$. Now, let us consider the case where $k \geq 3$ and $n > 3k - \ell$.

According to Claim \ref{rekurzio} we know that $g(n,k,2) \leq g(n-1,k,2) + g(n-1,k-1,2) = {n \choose k-1}+{n-1 \choose k-2} + {n \choose k-2}+{n-1 \choose k-3} = {n+1 \choose k-1}+{n \choose k-2}$ and $g(n,k,3) \leq g(n-1,k,3) + g(n-1,k-1,3) = {n-1 \choose k-1} + 2 {n-1 \choose k-3} + 3 {n-2 \choose k-3} + {n-1 \choose k-2} + 2 {n-1 \choose k-4} + 3 {n-2 \choose k-4} = {n \choose k-1} + 2 {n \choose k-3} + 3 {n-1 \choose k-3}$, so the theorems are true. \end{proof}

\subsection{Lower bound for the uniform case}

\begin{mydef}
\label{elem}
Let $f(\mathcal{H},m)$ denote the number of elements of $[n]$ contained in exactly $m$ members of $\mathcal{H}$.
\end{mydef} 

This definition leads to the following easily verifiable statement:

\begin{all}
\label{elemenkent}
For all $\mathcal{A} \in \left( {2^{[n]}} \right)^3$ we have $d(\mathcal{A}) = 3 \cdot f(\mathcal{A},3) + f(\mathcal{A},2)$.    
\end{all}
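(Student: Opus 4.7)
The plan is a straightforward double-counting argument, switching the order of summation so that we sum over elements of $[n]$ rather than over pairs of sets in $\mathcal{A}$. Writing $\mathcal{A} = \{A,B,C\}$, I would start from the identity
\[
d(\mathcal{A}) = |A\cap B| + |B\cap C| + |C\cap A| = \sum_{x \in [n]} \bigl( \mathbf{1}_{x\in A\cap B} + \mathbf{1}_{x\in B\cap C} + \mathbf{1}_{x\in C\cap A} \bigr),
\]
so the contribution of each $x \in [n]$ to $d(\mathcal{A})$ depends only on the number $m(x)$ of members of $\mathcal{A}$ that contain $x$.

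Next I would do a case distinction on $m(x) \in \{0,1,2,3\}$. If $x$ lies in no set or in exactly one set, it appears in none of the pairwise intersections, so its contribution is $0$. If $x$ lies in exactly two of the three sets, it appears in exactly one of the three pairwise intersections, contributing $1$. If $x$ lies in all three sets, it appears in every pairwise intersection, contributing $3$.

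Summing over $x \in [n]$ and grouping by the value of $m(x)$, we obtain
\[
d(\mathcal{A}) = 0 \cdot f(\mathcal{A},0) + 0 \cdot f(\mathcal{A},1) + 1 \cdot f(\mathcal{A},2) + 3 \cdot f(\mathcal{A},3) = 3 f(\mathcal{A},3) + f(\mathcal{A},2),
\]
which is the claim. Since the argument is just bookkeeping of the indicator functions above, there is no real obstacle; the only thing to be careful about is that $\mathcal{A}$ is a multiset in general (the sets $A,B,C$ need not be distinct), but the case analysis on $m(x)$ goes through unchanged as long as one counts the members of $\mathcal{A}$ with multiplicity when defining $m(x)$ and $f(\mathcal{A},m)$.
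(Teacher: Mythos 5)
Your proof is correct, and it is exactly the element-by-element double count that the paper has in mind: the paper states Claim \ref{elemenkent} without proof as an "easily verifiable statement," and your case analysis on $m(x)\in\{0,1,2,3\}$ is the standard verification (your remark about counting with multiplicity when $A$, $B$, $C$ are not distinct is a sensible extra precaution). No issues.
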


Using this, we can give a lower bound on $d(\mathcal{A})$.

\begin{all}
\label{3k}
Let $\mathcal{A} \in \left( {2^{[n]}} \right)^3$ be a family and let the sum of their sizes be $s=\vert A \vert + \vert B \vert + \vert C \vert$. Now if $n \leq h \leq 2n$ then
\begin{equation} \label{n<h<2n}
    d(\mathcal{A}) \geq s-n
\end{equation}
if $2n \leq s \leq 3n$ then
\begin{equation} \label{2n<h<3n}
    d(\mathcal{A}) \geq n + 2 \cdot (s-2n)
\end{equation}
hold.
\end{all}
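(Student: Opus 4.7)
The plan is to translate the bounds into a small linear program in the variables $f_i := f(\mathcal{A}, i)$ for $i = 0, 1, 2, 3$, and then read the inequalities off by nonnegativity.

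First I would collect the identities these variables satisfy. Every element of $[n]$ lies in exactly one of the four classes, so
\begin{equation*}
f_0 + f_1 + f_2 + f_3 = n.
\end{equation*}
Counting incidences (an element in exactly $m$ sets contributes $m$ to $|A|+|B|+|C|$),
\begin{equation*}
f_1 + 2 f_2 + 3 f_3 = s.
\end{equation*}
Finally, by Claim \ref{elemenkent},
\begin{equation*}
d(\mathcal{A}) = f_2 + 3 f_3.
\end{equation*}

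For the first bound \eqref{n<h<2n}, I would subtract the cardinality identity from the incidence identity to obtain
\begin{equation*}
f_2 + 2 f_3 = s - n + f_0.
\end{equation*}
Since $f_3 \geq 0$ and $f_0 \geq 0$,
\begin{equation*}
d(\mathcal{A}) = f_2 + 3 f_3 \;\geq\; f_2 + 2 f_3 \;=\; s - n + f_0 \;\geq\; s - n,
\end{equation*}
which proves \eqref{n<h<2n}. This step uses only $n \leq s$ implicitly (for the bound to be nontrivial); the hypothesis $s \leq 2n$ is not needed here.

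For the sharper bound \eqref{2n<h<3n}, I would use the nonnegativity of $f_1$. Solving the two identities for $f_1$ in terms of $f_0$ and $f_3$ gives
\begin{equation*}
f_1 = s - 2 f_2 - 3 f_3 = s - 2(s - n + f_0 - 2 f_3) - 3 f_3 = 2n - s - 2 f_0 + f_3.
\end{equation*}
The requirement $f_1 \geq 0$ forces $f_3 \geq s - 2n + 2 f_0 \geq s - 2n$ (this is where $2n \leq s$ matters, and also the hypothesis $s \leq 3n$ guarantees $f_1$ can actually vanish at the extremum). Plugging back,
\begin{equation*}
d(\mathcal{A}) = f_2 + 3 f_3 = (s - n + f_0 - 2 f_3) + 3 f_3 = s - n + f_0 + f_3 \;\geq\; s - n + (s - 2n) = n + 2(s - 2n),
\end{equation*}
which is \eqref{2n<h<3n}.

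There is no real obstacle: both inequalities are reduced to the nonnegativity of $f_0$ (for the first bound) and of $f_1$ (for the second bound), together with the two linear identities above. The only mild care needed is checking that the two bounds agree at $s = 2n$, which they do (both yield $n$), so the statement interpolates consistently across the boundary of the two regimes.
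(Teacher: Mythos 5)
Your proof is correct and follows essentially the same route as the paper: both arguments reduce the two inequalities to the linear identities $f_0+f_1+f_2+f_3=n$ and $f_1+2f_2+3f_3=s$ together with $d(\mathcal{A})=f_2+3f_3$ and nonnegativity of the $f_i$ (the paper packages the same facts as $f_1+f_2\le n$ for the first bound and $3n\ge 2f_1+3f_2+3f_3$ for the second). Your observation that $s\le 2n$ is not actually needed for the first bound also matches the paper's argument.
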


\begin{proof}
It is obvious that 
\begin{equation} \label{osszeg1}
s=3f(\mathcal{A},3)+2f(\mathcal{A},2)+f(\mathcal{A},1)
\end{equation}
and hence we have
\begin{equation} \label{elem1}
    d(\mathcal{A})=3f(\mathcal{A},3)+
    f(\mathcal{A},2)=s-f(\mathcal{A},2)-f(\mathcal{A},1).
\end{equation}
Here $f(\mathcal{A},2)+f(\mathcal{A},1)\leq n$ is obvious and \eqref{elem1} leads to \eqref{n<h<2n}. 

In order to prove \eqref{2n<h<3n} let us start with the inequality $3n \geq 2f(\mathcal{A},1) + 3f(\mathcal{A},2) + 3f(\mathcal{A},3)$. Using \eqref{osszeg1} the right-hand side can be written in the following form: $3n \geq f(\mathcal{A},1) + f(\mathcal{A},2)+s$.

This is equivalent to the inequality
\begin{equation} \label{elem2}
    s-f(\mathcal{A},1)-f(\mathcal{A},2) \geq n + 2(s-2n).
\end{equation}
Here the left hand side is just $d(\mathcal{A})$ by \eqref{elem1} and \eqref{elem2} really gives \eqref{2n<h<3n}. \end{proof}

\begin{all} \label{f(j,l)}
Let $j$ and $\ell \leq 3j$ be integers and let $A$, $B$, $C \in \displaystyle\binom{[f(j,\ell)]}{j}$ be three sets. Then $d(\mathcal{A}) \geq \ell.$
\end{all}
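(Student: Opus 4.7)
The plan is to apply Claim \ref{3k} directly, taking the ground set to be $[f(j,\ell)]$ (so the parameter $n$ of that claim equals $f(j,\ell)$) and the sum of sizes $s=|A|+|B|+|C|=3j$. The piecewise definition of $f(j,\ell)$ is set up so that each branch lands squarely inside exactly one of the two regimes \eqref{n<h<2n} and \eqref{2n<h<3n}, and I expect the resulting lower bound to simplify to $\ell$ on the nose in both cases. The whole argument should therefore reduce to a two-case verification plus the arithmetic checks needed to invoke each regime.

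In the regime $j\geq 2\ell/3$ I use $f(j,\ell)=3j-\ell$. The plan is to check the bracketing $f(j,\ell)\leq 3j\leq 2f(j,\ell)$: the left inequality is just $\ell\geq 0$, while the right rewrites as $2\ell\leq 3j$, which is the defining hypothesis of this branch. Then \eqref{n<h<2n} gives $d(\mathcal{A})\geq 3j-f(j,\ell)=\ell$, as required.

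In the regime $\ell/3\leq j<2\ell/3$ I use $f(j,\ell)=2j-\lceil\ell/3\rceil$. Here the plan is to verify the bracketing $2f(j,\ell)\leq 3j\leq 3f(j,\ell)$. The right inequality reduces to $\lceil\ell/3\rceil\leq j$, which holds because $j$ is an integer with $j\geq\ell/3$; the left inequality rewrites as $j\leq 2\lceil\ell/3\rceil$, which follows from $j<2\ell/3\leq 2\lceil\ell/3\rceil$. Then \eqref{2n<h<3n} yields $d(\mathcal{A})\geq f(j,\ell)+2\bigl(3j-2f(j,\ell)\bigr)=6j-3f(j,\ell)=3\lceil\ell/3\rceil\geq\ell$.

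I do not foresee a genuine obstacle; the only mildly delicate point is the left-hand inequality in the second case, where one must pass through $\lceil\ell/3\rceil\geq\ell/3$ to convert the strict inequality $j<2\ell/3$ into the weak inequality needed to enter the regime \eqref{2n<h<3n}. The overall moral is that the thresholds defining $f(j,\ell)$ are engineered precisely so that, with $s=3j$ fixed, Claim \ref{3k} saturates at the value $\ell$.
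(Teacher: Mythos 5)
Your proposal is correct and follows essentially the same route as the paper: both apply Claim \ref{3k} with ground set $[f(j,\ell)]$ and $s=3j$, verify the bracketing condition for the appropriate regime in each of the two branches of $f(j,\ell)$, and obtain $d(\mathcal{A})\geq \ell$ (via $3\lceil\ell/3\rceil\geq\ell$ in the first branch and exactly $\ell$ in the second). Your write-up is in fact slightly more careful than the paper's about the integrality step $j\geq\lceil\ell/3\rceil$ and the conversion of the strict inequality $j<2\ell/3$ into the weak one needed for the regime check.
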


\begin{proof}
Case 1: $\frac{\ell}{3} \leq j < \frac{2\ell}{3}$. Using the notation of \ref{3k}, $s=3j$ and $n=f(j,l)=2j-\lceil \frac{\ell}{3} \rceil$.
Since $2(2j-\lceil \frac{\ell}{3} \rceil) \leq  3j \leq 3(2j-\lceil \frac{\ell}{3} \rceil)$ we can use \eqref{2n<h<3n}, so $d(\mathcal{A}) \geq 2s-3n = 2 \cdot 3j - 3(2j-\lceil \frac{\ell}{3} \rceil) \geq \ell$.

Case 2: $\frac{2\ell}{3} \leq j$. Using the same notation, $h=3j$ and $n=f(j,\ell)=3j-\ell$. Since $3j-\ell \leq  3j \leq 2(3j-\ell)$ we can use \eqref{n<h<2n}, so $d(\mathcal{A}) \geq s-n = 3j-(3j-\ell) = \ell$.
\end{proof}

\textit{Proof of Theorem \ref{unialso}.} Let $j$ be an integer and let $\mathcal{F} = \{ F : F \in {[n] \choose k}, |F \cap [f(j,\ell)]| \geq j\}$ be a family. Because of Claim \ref{f(j,l)} we know that $\mathcal{F} \in \mathbf{H}_\ell$. The expression ${f(j,\ell) \choose i}{n-f(j,\ell) \choose k-i}$ is the number of sets containing $i$ elements from $[3j-\ell]$. By choosing the optimal $j$ we get the lower bound. $\square$

\begin{mj} \label{triviuni}
If $3k \leq 2n$ and $\ell \leq 3k-n$ or $2n < 3k$ and $\ell \leq 9k-5n$, then $g(n,k,\ell) = {n \choose k}$.
\end{mj}

\begin{sej}
In cases that are not included in the Remark \ref{triviuni}, the lower bound in Theorem \ref{unialso} is sharp. 
\end{sej}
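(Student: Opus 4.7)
The plan is to extend the shifting-based inductive approach that proved Theorems~\ref{uni1} and~\ref{uni2} to general $\ell$. By Corollary~\ref{balra} it suffices to bound the maximum size of a shifted family $\mathcal{F}\subset\binom{[n]}{k}$ in $\mathbf{H}_\ell$, and the lower-bound constructions from Theorem~\ref{unialso} --- the families of $k$-sets whose intersection with $[f(j,\ell)]$ has size at least $j$ --- are themselves shifted, so the conjecture really asserts that one of these is extremal among shifted families.

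The critical missing tool is a replacement for Claim~\ref{3k-2}, which the remark following it shows fails as soon as $\ell \ge 4$. I would aim for a $j$-indexed analog: for each $j$ with $\lceil\ell/3\rceil \le j \le k$, show that if a shifted family $\mathcal{F}\in\mathbf{H}_\ell$ looks like the $j$-construction (in the sense that enough members meet $[f(j,\ell)]$ in $\geq j$ points), then the restrictions of all members to $[f(j,\ell)]$ inherit an $\mathbf{H}_\ell$-type condition adapted to the parameter $j$. Given such a localization statement, the inductive step would proceed through Claim~\ref{rekurzio}, writing $g(n,k,\ell)\le g(n-1,k,\ell)+g(n-1,k-1,\ell)$ and checking that the conjectured closed form in~\eqref{gnkl} satisfies the same recursion; this amounts to a termwise Pascal identity on the summands, together with the verification that the maximizing $j$ for $(n,k)$ is compatible with the maximizers for $(n-1,k)$ and $(n-1,k-1)$. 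The base cases are $n=f(\lceil\ell/3\rceil,\ell)$, where $\binom{[n]}{k}$ itself lies in $\mathbf{H}_\ell$ by Claim~\ref{f(j,l)}, and $k=2$, which is a small direct case analysis.

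The hardest part will be the localization lemma itself. The counterexample in the remark after Claim~\ref{3k-2} concentrates on a kernel of a single heavy element together with a pairwise constraint on the remainder --- structurally exactly a $j\ge 2$ construction --- so one must track multiple possible extremal shapes simultaneously rather than reducing to one universal segment $[3k-\ell]$. I would expect the right formulation to be a dichotomy: either $\mathcal{F}$ essentially has a common $j$-core for some $j$ (and then one localizes to $[f(j,\ell)]$ and applies induction on $k$), or enough triples in $\mathcal{F}$ are "spread out" to force the shifting inequality of Claim~\ref{tolas} to strictly decrease the count, contradicting maximality. Making this dichotomy quantitative --- most naturally via Frankl's $\Delta$-system method or a kernel argument to extract the core --- is where the substantive work lies, and it is probably easiest to carry out first for $n$ very large compared to $k$ (where $j=\lceil\ell/3\rceil$ should be optimal, generalizing Theorem~\ref{unialso2}) and then push downward toward the threshold identified in Remark~\ref{triviuni}.
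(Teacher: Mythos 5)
This statement is labelled as a Conjecture in the paper and the paper offers no proof of it, so there is no argument of the author's to compare against; the only question is whether your proposal constitutes a proof on its own, and it does not. What you have written is a research programme, and you are candid about where it is incomplete: the entire weight of the argument rests on a ``localization lemma'' generalizing Claim~\ref{3k-2} to arbitrary $\ell$ and arbitrary $j$, which you describe but neither state precisely nor prove. The remark following Claim~\ref{3k-2} shows that the natural generalization already fails at $\ell=4$, and your proposed fix --- a dichotomy between ``has a common $j$-core'' and ``spread out enough that shifting strictly decreases'' --- is a heuristic, not a lemma. Until that dichotomy is formulated quantitatively and proved, nothing downstream of it exists.

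There are two further gaps even conditional on such a lemma. First, Claim~\ref{rekurzio} itself is only established for $\ell=2,3$, because its proof invokes Claim~\ref{3k-2}; for general $\ell$ you would need to re-derive the recursion $g(n,k,\ell)\le g(n-1,k,\ell)+g(n-1,k-1,\ell)$ from your new localization statement, which is not automatic. Second, the right-hand side of \eqref{gnkl} is a maximum over $j$, and the optimizing $j$ varies with the ratio $k/n$ --- this is precisely the content of the paper's Conjecture~\ref{ratio}, which is also unproven. When you pass from $(n,k)$ to $(n-1,k)$ and $(n-1,k-1)$, the three instances may be maximized at different values of $j$, so the termwise Pascal identity on the summands does not by itself show that the conjectured bound satisfies the recursion; you flag this as a ``verification,'' but it is a substantive obstruction, not a calculation. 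In short: the skeleton is plausible and consistent with how the paper handles $\ell=2,3$, but every load-bearing step for $\ell\ge 4$ is missing.
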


For a given $(n, k, \ell)$, it is difficult to compute the value \eqref{gnkl} in Theorem \ref{unialso}, as it must be determined for each $j$. In the rest of the section, we make observations about the expected optimal value of $j$.

\begin{sej} \label{ratio}
For every $\ell \geq 2$ there is a sequence $0 = \alpha_0(\ell) < \alpha_1(\ell) < \alpha_2(\ell) < \dots < \frac{1}{3}$ of real numbers such that if $n$, $k$ tends to infinity so that $\frac{k}{n} \rightarrow \gamma$ where $\alpha_{i-1}(\ell) < \gamma < \alpha_i(\ell)$ then the right-hand side of \eqref{gnkl} gives the maximum for $j=i$.
\end{sej}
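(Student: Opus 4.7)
The plan is a two-step asymptotic reduction followed by a likelihood-ratio analysis. First, for fixed $j$ and $\ell$, the quantity
\[
S_j(n,k)\;:=\;\sum_{i=j}^{k}\binom{f(j,\ell)}{i}\binom{n-f(j,\ell)}{k-i}
\]
is $\binom{n}{k}$ times a hypergeometric tail probability. Since $f(j,\ell)$ remains constant while $n,k\to\infty$ with $k/n\to\gamma$, the hypergeometric distribution converges to $\mathrm{Bin}(f(j,\ell),\gamma)$, so
\[
\frac{S_j(n,k)}{\binom{n}{k}}\;\longrightarrow\;P_j(\gamma,\ell)\;:=\;\Pr\!\bigl[\mathrm{Bin}(f(j,\ell),\gamma)\geq j\bigr].
\]
Since $\binom{n}{k}$ is common to every term in \eqref{gnkl}, the asymptotic maximizer over $j$ coincides with $j^{\ast}(\gamma,\ell)$, the integer $j\geq\lceil\ell/3\rceil$ which maximizes $P_j(\gamma,\ell)$. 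The conjecture therefore reduces to describing how $j^{\ast}(\gamma,\ell)$ depends on $\gamma$ in $(0,1/3)$.

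Next, I would study the likelihood ratios $\rho_j(\gamma,\ell):=P_{j+1}(\gamma,\ell)/P_j(\gamma,\ell)$, each a rational function of $\gamma$. The aim is to prove (a) for every $j$, $\rho_j(\gamma,\ell)$ is strictly increasing in $\gamma$ on $(0,1/3)$, and (b) the family $\{P_j(\gamma,\ell)\}_j$ has the strict monotone likelihood ratio property in the pair $(j,\gamma)$, so that $j^{\ast}(\gamma,\ell)$ is non-decreasing in $\gamma$. Property (a) forces each equation $\rho_j(\gamma,\ell)=1$ to have at most one solution $\alpha_j(\ell)\in(0,1/3)$, the transition point where $P_{j+1}$ overtakes $P_j$. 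Combined with unimodality of $j\mapsto P_j(\gamma,\ell)$ (a consequence of (b)), this produces the claimed sequence $0=\alpha_0(\ell)<\alpha_1(\ell)<\alpha_2(\ell)<\cdots<1/3$: on the $i$-th open sub-interval, $P_i$ dominates each neighbour and hence is globally maximal.

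The main obstacle will be proving these monotonicity properties cleanly across the two regimes of $f(j,\ell)$. In regime~2 ($j\geq 2\ell/3$, $f(j,\ell)=3j-\ell$) an explicit asymptotic computation of the leading terms in the tail sums gives
\[
\rho_j(\gamma,\ell)\;\approx\;C(j,\ell)\,\gamma(1-\gamma)^2,\qquad C(j,\ell)=\frac{(3j+3-\ell)(3j+2-\ell)(3j+1-\ell)}{(j+1)(2j+2-\ell)(2j+1-\ell)},
\]
and $\gamma(1-\gamma)^2$ is strictly increasing on $(0,1/3)$, which suggests (a). However, as $\gamma$ approaches $1/3$ the ratio of consecutive binomial tail terms at $i=j$ tends to $1$, so non-leading corrections become non-negligible and the monotonicity argument must be carried out for the full tail sums rather than for the leading term alone; this is the technical heart of the proof. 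Establishing (b) in a form strong enough to guarantee that $j^{\ast}(\gamma,\ell)$ increases by exactly $1$ at each threshold, so that the subscript in $\alpha_i(\ell)$ genuinely matches the optimal index $j=i$, is likely the most delicate point, and may in fact require an adjustment of the statement to allow $j^{\ast}$ to jump by more than one at certain transitions. A separate matching argument is needed at the regime~1 / regime~2 boundary $j=\lceil 2\ell/3\rceil$, where the exponent of $(1-\gamma)$ in the leading form of $\rho_j$ changes.
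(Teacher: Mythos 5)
First, be aware that the statement you are addressing is stated in the paper as a conjecture: the paper offers no proof of it, only an explicit computation of $\alpha_1(2)=\frac{5-\sqrt{13}}{6}$ obtained by comparing the $j=1$ and $j=2$ expressions in the limit $k/n\to\gamma$. Your opening reduction is correct and is exactly the right way to organize that computation: $S_j(n,k)/\binom{n}{k}$ is a hypergeometric tail which, for fixed $j$ and $\ell$, converges to $P_j(\gamma,\ell)=\Pr\bigl[\mathrm{Bin}(f(j,\ell),\gamma)\geq j\bigr]$, and for $\ell=2$ the equation $P_2(\gamma,2)=P_1(\gamma,2)$, i.e.\ $6\gamma(1-\gamma)^2+4\gamma^2(1-\gamma)+\gamma^3=1$, is precisely the paper's cubic $3\gamma^3-8\gamma^2+6\gamma-1=0$. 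So your framework subsumes and cleanly generalizes the one piece of evidence the paper actually supplies.

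As a proof, however, the proposal has a genuine gap, and the heuristic you offer in its support points the wrong way. The whole content of the conjecture is your properties (a) and (b), and neither is established; you concede (b) may even require changing the statement. More concretely, the leading-term approximation $\rho_j\approx C(j,\ell)\gamma(1-\gamma)^2$ cannot ``suggest'' the existence of the thresholds, because it gives a qualitatively wrong prediction in the checkable cases: the maximum of $\gamma(1-\gamma)^2$ on $(0,1/3]$ is $4/27$, so for $\ell=2$ the leading ratio for $j=1$ tops out at $6\cdot\frac{4}{27}=\frac{8}{9}<1$ (and for $j=2$ at $\frac{35}{6}\cdot\frac{4}{27}\approx0.86<1$), predicting no crossing in $(0,1/3)$ at all; yet the true ratio $6\gamma(1-\gamma)^2+4\gamma^2(1-\gamma)+\gamma^3$ equals $\frac{11}{9}>1$ at $\gamma=\frac13$ and crosses $1$ at $\approx 0.2324$, matching the paper. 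The crossings are thus created entirely by the lower-order tail terms, so monotonicity of $\gamma(1-\gamma)^2$ carries essentially no information and the analysis must be done for the full tail sums from the outset. Finally, the maximum in \eqref{gnkl} is taken over all $j\leq k$, while your limit $S_j/\binom{n}{k}\to P_j$ is only valid for fixed (bounded) $j$; you still need a uniform argument excluding maximizers with $j\to\infty$, which unimodality of the limiting sequence $P_j$ does not by itself supply.
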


\begin{mydef}
Let $m(n,k,\ell)$ be the minimum value of $j$ where \eqref{gnkl} gives the maximum.
\end{mydef}

In the following claim we determine the value of $\alpha_1(2)$ in Conjecture \ref{ratio}.

\begin{all}
Let $\ell=2$ and $n$ be sufficiently large. Then $m(n,k,\ell) \geq 2$ if the ratio $\frac{k}{n}$ is greater then $\frac{5-\sqrt{13}}{6}$.   
\end{all}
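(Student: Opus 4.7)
The plan is to show that when $n$ is sufficiently large and $k/n > \frac{5-\sqrt{13}}{6}$, the $j = 2$ summand on the right-hand side of \eqref{gnkl} strictly exceeds the $j = 1$ summand. This rules out $j = 1$ as an optimiser and therefore forces $m(n,k,2) \geq 2$. Using $f(1,2) = 2 - \lceil 2/3 \rceil = 1$ and $f(2,2) = 6 - 2 = 4$, the two quantities to compare are
\[
V_1 = \binom{n-1}{k-1}, \qquad V_2 = 6\binom{n-4}{k-2} + 4\binom{n-4}{k-3} + \binom{n-4}{k-4}.
\]

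Next I would form the ratio $V_2/V_1$. Expanding each $\binom{n-4}{k-i}/\binom{n-1}{k-1}$ as a ratio of falling factorials and cancelling gives
\[
\frac{V_2}{V_1} = \frac{6(k-1)(n-k)(n-k-1) + 4(k-1)(k-2)(n-k) + (k-1)(k-2)(k-3)}{(n-1)(n-2)(n-3)}.
\]
Setting $\gamma = k/n$, this is a rational function in $n$ and $k$ that converges to
\[
\Phi(\gamma) := 6\gamma(1-\gamma)^2 + 4\gamma^2(1-\gamma) + \gamma^3 = 3\gamma^3 - 8\gamma^2 + 6\gamma
\]
as $n \to \infty$, uniformly on each region $k/n \geq \gamma_0 > 0$. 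Hence for $n$ sufficiently large the inequality $V_2 > V_1$ is equivalent to $P(\gamma) := 3\gamma^3 - 8\gamma^2 + 6\gamma - 1 > 0$.

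The final step is to locate the positivity set of $P$. Observing $P(1) = 0$, polynomial division gives $P(\gamma) = (\gamma-1)(3\gamma^2 - 5\gamma + 1)$, and the quadratic factor has roots $\frac{5 \pm \sqrt{13}}{6}$. On the meaningful range $0 < \gamma < 1/3$ (outside of which Remark \ref{triviuni} trivialises the problem) one has $\gamma - 1 < 0$ and $\gamma < \frac{5+\sqrt{13}}{6}$, so $P(\gamma) > 0$ iff $3\gamma^2 - 5\gamma + 1 < 0$, iff $\gamma > \frac{5-\sqrt{13}}{6}$. Combining with the uniform convergence above, for $n$ sufficiently large and $k/n > \frac{5-\sqrt{13}}{6}$ we get $V_2 > V_1$, which proves the claim. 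No step presents a serious obstacle; recognising $j=2$ as the correct rival to $j=1$ is the only conceptual input, after which the proof reduces to an elementary cubic analysis.
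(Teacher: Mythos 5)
Your proposal is correct and follows essentially the same route as the paper: compare the $j=1$ term $\binom{n-1}{k-1}$ with the $j=2$ term $6\binom{n-4}{k-2}+4\binom{n-4}{k-3}+\binom{n-4}{k-4}$, pass to the limit $k/n\to\gamma$, and reduce to the cubic inequality $3\gamma^3-8\gamma^2+6\gamma-1>0$, whose relevant root is $\frac{5-\sqrt{13}}{6}$. Your version is slightly more careful than the paper's (explicit factorization $(\gamma-1)(3\gamma^2-5\gamma+1)$ and strict inequality, which is what the definition of $m(n,k,\ell)$ actually requires), but the argument is the same.
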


\begin{proof}
If $j=1$, then $f(j,\ell)=1$, so the value of the expression in \eqref{gnkl} is $n-1 \choose k-1$. If $j=2$, then $f(j,\ell)=4$, so in this case the value of the expression in \eqref{gnkl} is ${4 \choose 2}{n-4 \choose k-2}+{4 \choose 3}{n-4 \choose k-3}+{4 \choose 4}{n-4 \choose k-4}$. So $m(n,k,\ell) \geq 2$ if%$\frac{(n-4!)}{(k-4)!(n-k-2)!}$-val ezt kapjuk:
$$\frac{(n-1)(n-2)(n-3)}{(k-3)(k-2)(k-1)(n-k-1)(n-k)} \leq 6 \cdot \frac{1}{(k-3)(k-2)}+ $$ $$4 \cdot \frac{1}{(k-3)(n-k-1)} + \frac{1}{(n-k-1)(n-k)}.$$

If we take $n$ (and $k$) to infinity, we get this approximation
$$\frac{n^3}{k^3(n-k)^2} \leq 6 \cdot \frac{1}{k^2}+4 \cdot \frac{1}{k(n-k)} + \frac{1}{(n-k)^2}.$$

Multiplying by the denominators it gives $n^3 \leq 6 \cdot k \cdot n^2 - 8 \cdot k^2 \cdot n + 3 \cdot k^3$. This means that $3 \left( \frac{k}{n} \right)^3-8 \left( \frac{k}{n} \right) ^2+6  \left( \frac{k}{n} \right) -1 \geq 0$. It is true, if  $1 \geq \frac{k}{n} \geq \frac{5-\sqrt{13}}{6}$ or $0 \geq \frac{k}{n}$, so $m(n,k,\ell) \geq 2$ if $\frac{k}{n} \geq \frac{5-\sqrt{13}}{6} \pm \varepsilon \approx 0.23$. 
\end{proof}

In a similar way we get that $m(n,k,\ell) \geq 3$ if and only if $\frac{k}{n} \geq \alpha_2 \approx 0.29$.

\textit{Proof of Theorem \ref{unialso2}.}
First let us assume that there are two sets $A$ and $B \in \mathcal{F}$ that are disjoint. Since $|A \cap B| = 0$, therefore $|A \cap C| + |B \cap C| \geq 2$, so all sets must contain at least 2 elements from $A \cup B$. An upper bound can be obtained by choosing the first two elements of the sets from $A \cup B$ and the others from $[n]$, so that $| \mathcal{F} | \leq {2k \choose 2}{n \choose k-2}$.

The other case when there are no two disjoint sets. In that case we can use Theorem \ref{1metszet}, so $|\mathcal{F}| \leq {n-1 \choose k-1}$ and equality is achieved if and only if there is an element $p \in [n]$ such that $H \in \mathcal{F}$ if and only if $p \in H$.

To prove the theorem we have to show that ${n-1 \choose k-1} \geq {2k \choose 2}{n \choose k-2}$. Simplifying this, we get that $(n-k+2)(n-k+1) \geq n k (k-1) (2k-1)$. Using the condition $n \geq 4 k^3$, we get that $(n-k+2)(n-k+1) \geq \frac{n^2}{2} \geq \frac{n}{2} 4k^3 \geq n k (k-1) (2k-1)$. $\square$

\section{The non-uniform case}

\subsection{General observations in the non-uniform case}

\begin{mydef}
Let $\mathcal{F} \subset 2^{n}$, $F \subset G$, $F \in \mathcal{F}$ and $G \notin \mathcal{F}$. We call $\mathcal{F} - \{ F \} \cup \{ G \}$ as an \textit{upward-shifting} of $\mathcal{F}$. 
A family $\mathcal{F'}$ is called an \textit{up-shifted} version of $\mathcal{F}$ if $\mathcal{F'}$ can be obtained from $\mathcal{F}$ by some upward-shifting.
\end{mydef}

\begin{mydef}
We call a family $\mathcal{F} \subset 2^{[n]}$ \textit{upward-closed} if for every $F \in \mathcal{F}$, if $F \subset G$, then $G \in \mathcal{F}$.
\end{mydef}

The following two claims are easy to prove.

\begin{all}
\label{nemuniupcl}
For each family $\mathcal{F} \subset 2^{[n]}$, there exists an upward-closed family $\mathcal{F'} \subset 2^{[n]}$ which is an up-shifted version of $\mathcal{F}$. $\square$
\end{all}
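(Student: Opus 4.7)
The plan is to argue by a termination-of-monovariant argument. I would define the weight function
$$\Phi(\mathcal{H}) = \sum_{H \in \mathcal{H}} |H|$$
on families $\mathcal{H} \subset 2^{[n]}$, and show that every upward-shifting strictly increases $\Phi$ while preserving the cardinality of the family.

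First I would verify the monovariant property: if $\mathcal{F}' = \mathcal{F} - \{F\} \cup \{G\}$ is an upward-shifting of $\mathcal{F}$, then $F \in \mathcal{F}$ and $G \notin \mathcal{F}$ force $F \neq G$, and combined with $F \subset G$ this gives $|G| > |F|$. Hence $|\mathcal{F}'| = |\mathcal{F}|$ and $\Phi(\mathcal{F}') = \Phi(\mathcal{F}) + (|G| - |F|) > \Phi(\mathcal{F})$. Moreover $\Phi$ is bounded above by $n \cdot 2^n$ (or trivially by $n \cdot |\mathcal{F}|$), and takes only integer values.

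Starting from $\mathcal{F}$ I would apply upward-shiftings repeatedly; since $\Phi$ strictly increases with each step and is bounded, the process must terminate after finitely many steps at some family $\mathcal{F}'$ which is an up-shifted version of $\mathcal{F}$. To conclude, I would argue that this terminal $\mathcal{F}'$ is upward-closed: otherwise there exist $F \in \mathcal{F}'$ and $G \supset F$ with $G \notin \mathcal{F}'$, but then $\mathcal{F}' - \{F\} \cup \{G\}$ would be a legal upward-shifting, contradicting termination.

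There is no real obstacle here; the only thing to be slightly careful about is interpreting an ``up-shifted version'' as the result of \emph{finitely many} upward-shiftings (iterated composition), rather than a single one, since the latter reading makes the claim false for most $\mathcal{F}$. This convention is the natural one and matches the analogous treatment of $\tau$-shifting earlier in the paper (see Claim \ref{unibalra}), which uses exactly the same termination scheme with the monovariant $s(\mathcal{H})$.
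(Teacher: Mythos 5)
Your proof is correct; the paper marks this claim with $\square$ as ``easy to prove'' and gives no argument, and your monovariant $\Phi(\mathcal{H})=\sum_{H\in\mathcal{H}}|H|$ with the termination-plus-``no legal move means upward-closed'' reasoning is exactly the standard argument the paper implicitly relies on (the same scheme as its proof of Claim \ref{unibalra}, and the same quantity the paper itself invokes later when it notes that $\sum_{F\in\mathcal{F}}|F|$ increases under upward-shifting and is bounded). Your remark that ``up-shifted version'' must mean the result of finitely many (possibly zero) upward-shiftings is the right reading of the definition.
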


\begin{all}
\label{nemunimetszet}
Let $\mathcal{F}'$ be an up-shifted version of $\mathcal{F}$. Then if $d(\mathcal{A}) \geq \ell$ for all $\mathcal{A} \subset \mathcal{F}$, then $d(A',B',C') \geq \ell$ for all $\{A', B', C'\} = \mathcal{A'} \subset \mathcal{F}'$. $\square$
\end{all}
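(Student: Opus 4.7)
The plan is to reduce to the case of a single upward-shifting and then exploit the fact that enlarging one set in a triple can only increase the value of $d$. By the definition of \emph{up-shifted version}, $\mathcal{F}'$ is obtained from $\mathcal{F}$ by a finite sequence of single upward-shifts $\mathcal{F} = \mathcal{F}_0 \to \mathcal{F}_1 \to \dots \to \mathcal{F}_m = \mathcal{F}'$. A straightforward induction on $m$ reduces the claim to the case $m = 1$, so I would focus on a single step: $\mathcal{F}' = (\mathcal{F}\setminus\{F\}) \cup \{G\}$ with $F \in \mathcal{F}$, $F \subset G$, $G \notin \mathcal{F}$.

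Now take any three distinct sets $\{A', B', C'\} \subset \mathcal{F}'$. If $G \notin \{A', B', C'\}$, then the triple already lies in $\mathcal{F}$ and the bound $d(A', B', C') \geq \ell$ is immediate from the hypothesis. Otherwise exactly one of the three equals $G$; say $A' = G$, and $B', C' \in \mathcal{F} \setminus \{F\} \subset \mathcal{F}$. Apply the hypothesis to the triple $\{F, B', C'\} \subset \mathcal{F}$ to obtain $d(F, B', C') \geq \ell$.

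The remaining step is the monotonicity observation: since $F \subset G$, we have $|G \cap X| \geq |F \cap X|$ for every $X \subset [n]$. Applying this with $X = B'$ and $X = C'$ yields
\begin{equation*}
d(G, B', C') = |G \cap B'| + |B' \cap C'| + |C' \cap G| \geq |F \cap B'| + |B' \cap C'| + |C' \cap F| = d(F, B', C') \geq \ell,
\end{equation*}
which is the desired inequality. I do not expect any real obstacle: the only thing to be careful about is that the triple $\{F, B', C'\}$ used in the middle of the argument genuinely lives in the original family $\mathcal{F}$ (which it does, because $F \in \mathcal{F}$ and $B', C' \in \mathcal{F} \setminus \{F\}$), so the $(3,2,\ell)$-property of $\mathcal{F}$ is legitimately applicable there.
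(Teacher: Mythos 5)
Your proof is correct: the paper states this claim without proof (marking it as easy), and your argument — reducing to a single upward-shift and then using the monotonicity $|G\cap X|\ge|F\cap X|$ for $F\subset G$ — is precisely the intended one. The one point worth checking, that $\{F,B',C'\}$ is a genuine triple of distinct members of $\mathcal{F}$, you address explicitly.
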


\begin{all}
Suppose that the family $\mathcal{F}$ is such that $d(\mathcal{A}) \geq \ell$ is true for all set-triples $\mathcal{A} \subset \mathcal{F}$ then there exists an upward-closed shifted family $\mathcal{F'}$ such that for all sets $\mathcal{A'} \subset \mathcal{F'}$, $d(\mathcal{A'}) \geq \ell$ and $|\mathcal{F}|=|\mathcal{F'}|$ are satisfied.
\end{all}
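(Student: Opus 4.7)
The plan is to compose the two shifting operations already introduced. First I would apply Claim~\ref{nemuniupcl} to replace $\mathcal{F}$ by an up-shifted version $\mathcal{F}_1$ that is upward-closed; each upward-shifting swaps one set for another, so $|\mathcal{F}_1|=|\mathcal{F}|$, and Claim~\ref{nemunimetszet} ensures that $d(\mathcal{A})\geq\ell$ is preserved throughout this first stage.

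Second, I would apply $\tau_{x,y}$-shiftings to $\mathcal{F}_1$ exactly as in Claim~\ref{unibalra}: whenever the current family is not shifted there is a pair $(x,y)$ for which $\tau_{x,y}$ strictly decreases $s(\cdot)\in\mathbb{N}$, so the iteration terminates in a shifted family $\mathcal{F}'$. Each $\tau_{x,y}$ is a bijection, so $|\mathcal{F}'|=|\mathcal{F}|$, and by Claim~\ref{tolas} the condition $d\geq\ell$ survives each step. The one ingredient missing from the earlier material --- and the main obstacle of the proof --- is a preservation lemma: if $\mathcal{G}$ is upward-closed, then so is $\tau_{x,y}(\mathcal{G})$. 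Without this, the careful structure obtained in the first stage could be destroyed by the shiftings of the second.

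I would prove the preservation lemma by case analysis on the membership of $x,y$ in an arbitrary $K\supsetneq H$ with $H\in\tau_{x,y}(\mathcal{G})$, aiming to show $K\in\tau_{x,y}(\mathcal{G})$. In each case the strategy is to exhibit a set already in $\mathcal{G}$ sitting inside a relevant auxiliary set and let upward-closedness do the rest. The cases $x,y\in K$ and $x\notin K$ are straightforward: tracing whether $H$ comes from $\mathcal{G}'$ or is the image of some $F\in\mathcal{G}''$, the preimage $F$ (or $H$ itself) fits inside the appropriate witness, placing $K$ either directly in $\mathcal{G}'$ or as the image $G\cup\{y\}-\{x\}$ of $G=K\cup\{x\}-\{y\}\in\mathcal{G}''$. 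The subtle case is $x\in K$, $y\notin K$: here upward-closedness gives $K\in\mathcal{G}$ at once, but to conclude $K\in\mathcal{G}'$ one must additionally verify $K\cup\{y\}-\{x\}\in\mathcal{G}$. The key observation is that $K\cup\{y\}-\{x\}$ contains either $H\cup\{y\}$ when $x\notin H$, or $H\cup\{y\}-\{x\}$ when $x\in H$; the former is in $\mathcal{G}$ by upward-closedness applied to $H$, and the latter is in $\mathcal{G}$ because with $x\in H$ and $y\notin H$, the membership $H\in\mathcal{G}'$ forces $H\cup\{y\}-\{x\}\in\mathcal{G}$ via the third defining clause of $\mathcal{F}'$.

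Combining the two stages with this preservation lemma produces an upward-closed shifted family $\mathcal{F}'$ with $|\mathcal{F}'|=|\mathcal{F}|$ on which $d(\mathcal{A}')\geq\ell$ still holds for every triple, which is exactly the statement.
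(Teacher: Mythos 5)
Your argument is correct, but it takes a genuinely different route from the paper. The paper does not prove that $\tau_{x,y}$ preserves upward-closedness; instead it alternates the two operations --- up-shift until upward-closed, then $\tau$-shift until shifted, then up-shift again if shiftedness destroyed upward-closedness, and so on --- and terminates the loop with a potential-function argument: $\sum_{F\in\mathcal{F}}|F|$ strictly increases under every upward-shifting, is invariant under every $\tau_{x,y}$, and is bounded above, so only finitely many rounds can involve an upward-shifting and the cycle must halt at a family with both properties. Your approach replaces that termination argument with the preservation lemma that shifting an upward-closed family yields an upward-closed family, after which a single pass of each stage suffices. Your case analysis for the lemma is sound: the only delicate point is indeed $x\in K$, $y\notin K$, where $K$ cannot arise as an image of a set in $\mathcal{G}''$, so one must place $K$ in $\mathcal{G}'$ by verifying $K\cup\{y\}-\{x\}\in\mathcal{G}$, and your two witnesses ($H$ itself when $x\notin H$, and $H\cup\{y\}-\{x\}$ supplied by the third clause when $x\in H$) do exactly that. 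The trade-off is that the paper's version is shorter and avoids the case analysis entirely, while yours isolates a reusable structural fact (standard in the shifting literature) and makes the final family's construction more transparent; either proof is acceptable.
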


\begin{proof}
Take an upward-closed family $\mathcal{F}_1$ which is up-shifted of $\mathcal{F}$. It exists because of Claim \ref{nemuniupcl}. If it is a shifted family, then we are done. If not, then we can take a shifted family $\mathcal{F}_2$ which is a shift of $\mathcal{F}_1$ because of Claim \ref{unibalra}. If it is an upward-closed family, then we are done. If not, we can do this cycle again.

We can see that $\sum_{F \in \mathcal{F}} \vert F \vert$ increases with each upward-shifting and remains the same with each shift. Since $\sum_{F \in \mathcal{F}} \vert F \vert$ has an upper bound, so the cycle once it stops and then we get an upward-closed shifted family for which the condition is true because of Corollary \ref{balra} and Claim \ref{nemunimetszet}.
\end{proof}

\subsection{Sets with large intersection} %$ $ \\

\begin{jel}
Let $\overline{d(\mathcal{A})}=2 f(\mathcal{A},1) + 3 f(\mathcal{A},2) + 3 f(\mathcal{A},3)$ and $\overline{d(A,B)}=2 f(\{A,B\},1) + 3 f(\{A,B\},2)$. Denote by $\overline{\mathbf{H}_\ell}$ the class of families such that $\mathcal{H} \in \overline{\mathbf{H}_\ell}$ and $\mathcal{A} \subset \mathcal{H}$ imply $\overline{d(\mathcal{A})} \leq \ell$. Let $\mathcal{F}^-=\{ \overline{F} : F \in \mathcal{F} \}$ and let $\overline{h}(n,\ell) = \max \{ |\mathcal{F}| : \mathcal{F} \subset 2^{[n]}, \mathcal{F}\in \overline{\mathbf{H}_\ell}\}$.
\end{jel}

\begin{mj}
    The following proof is not working when $p=0$. It is easy to see that in this case $h(n,3n-x)=2$ if $0 \leq x \leq 3$ and $h(n, 3n-x)=3$ if $4 \leq x \leq 5$.
\end{mj}

\textit{Proof of Theorem \ref{3n-p}.} The following claims are needed to prove the theorem.

\begin{all} \label{kompl}
Let $A$, $B$, $C$ be sets such that $|A|+|B|+|C|=s$. Then $d(\mathcal{A}) + \overline{d(\mathcal{A})} = 2s$. 
\end{all}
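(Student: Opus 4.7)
The plan is a direct calculation, since everything needed has already been set up earlier in the paper. I would unfold both quantities into their element-wise expressions and check that they sum to $2s$.

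First, I would invoke Claim \ref{elemenkent}, which gives $d(\mathcal{A}) = 3 f(\mathcal{A},3) + f(\mathcal{A},2)$. Combined with the definition $\overline{d(\mathcal{A})} = 2 f(\mathcal{A},1) + 3 f(\mathcal{A},2) + 3 f(\mathcal{A},3)$, summation yields
$$d(\mathcal{A}) + \overline{d(\mathcal{A})} = 2 f(\mathcal{A},1) + 4 f(\mathcal{A},2) + 6 f(\mathcal{A},3) = 2\bigl(f(\mathcal{A},1) + 2 f(\mathcal{A},2) + 3 f(\mathcal{A},3)\bigr).$$

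Next, I would observe that the quantity in parentheses equals $s$ by double counting: an element of $[n]$ lying in exactly $m$ of the sets $A,B,C$ contributes $m$ to $|A|+|B|+|C|$, and there are $f(\mathcal{A},m)$ such elements; elements lying in none contribute nothing. This is precisely identity \eqref{osszeg1} already recorded in the proof of Claim \ref{3k}. Substituting gives $d(\mathcal{A}) + \overline{d(\mathcal{A})} = 2s$, as required.

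There is no real obstacle here; the claim is essentially a bookkeeping identity packaging Claim \ref{elemenkent} together with the definition of $\overline{d}$, and its role is to set up a complementary formulation that will let later arguments pass between $\mathcal{F}$ and $\mathcal{F}^-$ in the proof of Theorem \ref{3n-p}.
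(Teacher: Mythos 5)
Your proof is correct and follows exactly the paper's own (one-line) argument: both expand $d(\mathcal{A})+\overline{d(\mathcal{A})}$ as $2f(\mathcal{A},1)+4f(\mathcal{A},2)+6f(\mathcal{A},3)$ via Claim \ref{elemenkent} and the definition of $\overline{d}$, then identify this with $2s$ using the double-counting identity \eqref{osszeg1}. You simply spell out the bookkeeping that the paper leaves implicit.
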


\begin{proof}
    $d(\mathcal{A}) + \overline{d(\mathcal{A})} = 2 f(\mathcal{A},1) + 4 f(\mathcal{A},2) + 6 f(\mathcal{A},3) = 2s$.
\end{proof}

\begin{all}
$h(n,3n-\ell)= max \{ |\mathcal{F}| : \mathcal{F} \subset 2^{[n]}, \mathcal{F}\in \overline{\mathbf{H_{\ell}}}\}=\overline{h}(n,\ell)$.
\end{all}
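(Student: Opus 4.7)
The plan is to use the complementation bijection $F \mapsto \overline{F} = [n] \setminus F$, which induces a size-preserving map $\mathcal{F} \mapsto \mathcal{F}^-$ on families in $2^{[n]}$, and to show that it sends $\mathbf{H}_{3n-\ell}$ bijectively onto $\overline{\mathbf{H}_\ell}$.

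First I would carry out the inclusion-exclusion computation: for any three sets $A$, $B$, $C \subset [n]$ with $s = |A|+|B|+|C|$, writing $|\overline{A} \cap \overline{B}| = n - |A \cup B| = n - |A| - |B| + |A \cap B|$ and summing over the three pairs gives
\[ d(\overline{A},\overline{B},\overline{C}) = 3n - 2s + d(A,B,C). \]
Applying Claim \ref{kompl}, which states $d(\mathcal{A}) + \overline{d(\mathcal{A})} = 2s$, and substituting $d(A,B,C) = 2s - \overline{d(A,B,C)}$ yields the clean identity
\[ d(\overline{A},\overline{B},\overline{C}) = 3n - \overline{d(A,B,C)}. \]

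From this identity the equivalence is immediate: $d(\overline{A},\overline{B},\overline{C}) \geq 3n - \ell$ holds if and only if $\overline{d(A,B,C)} \leq \ell$. Consequently, for a family $\mathcal{F} \subset 2^{[n]}$, the complemented family $\mathcal{F}^-$ lies in $\mathbf{H}_{3n-\ell}$ precisely when $\mathcal{F}$ lies in $\overline{\mathbf{H}_\ell}$. Since complementation is an involution and preserves cardinality, it biject the families counted by $h(n,3n-\ell)$ with those counted by $\overline{h}(n,\ell)$, proving the claim.

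There is essentially no obstacle; the only step requiring care is the inclusion-exclusion computation of $d(\overline{A},\overline{B},\overline{C})$ and its combination with Claim \ref{kompl}, both of which are routine.
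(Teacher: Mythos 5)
Your proof is correct and takes essentially the same approach as the paper: both rest on the complementation identity $d(\mathcal{A}^-)=3n-\overline{d(\mathcal{A})}$ together with the fact that $\mathcal{F}\mapsto\mathcal{F}^-$ is a size-preserving involution on $2^{[n]}$. The only (immaterial) difference is that the paper derives this identity by observing $f(\mathcal{A},i)=f(\mathcal{A}^-,3-i)$ and rewriting $d(\mathcal{A})=3f(\mathcal{A},3)+f(\mathcal{A},2)$, whereas you obtain it by inclusion--exclusion on pairwise intersections combined with the relation $d(\mathcal{A})+\overline{d(\mathcal{A})}=2s$.
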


\begin{proof} 
From Claim \ref{elemenkent} we know that $d(\mathcal{A}) = 3 f(\mathcal{A},3) + f(\mathcal{A},2)$ and $f(\mathcal{A},3) + f(\mathcal{A},2) + f(\mathcal{A},1) + f(\mathcal{A},0)=n$, so $d(\mathcal{A})= 3 n - 2 f(\mathcal{A},2) - 3 f(\mathcal{A},1) - 3  f(\mathcal{A},0) = 3n - 2f(\mathcal{A}^-,1) - 3f(\mathcal{A}^-,2) - 3f(\mathcal{A}^-,3)=3n - \overline{d(\mathcal{A}^-)}$.

We obtained that $d(\mathcal{A})+ \overline{d(\mathcal{A}^-)}=3n$ holds for all $\mathcal{A} \subset \mathcal{F}$ consequently $\mathcal{F} \in \mathbf{H_{\ell}}$ if and only if $\mathcal{F}^- \in \overline{\mathbf{H_{\ell}}}$.
\end{proof}

Using this claim, we obtain the following modified problem. Maximize $|\mathcal{F}|$ under the condition that all $\mathcal{A} \subset \mathcal{F}$ satisfy $\overline{d(\mathcal{A})} \leq x$.

We  will distinguish cases according to the divisibility of $x$ by 6. Let $x=6p+q$, where $p$, $q$ are integers and $0 \leq q \leq 5$.

\begin{all}
Let $x \geq 6p$. Then $\overline{h}(n,x) \geq \sum_{i=0}^{p} {n \choose i}$.
\end{all}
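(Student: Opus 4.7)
The plan is to exhibit a concrete family in $\overline{\mathbf{H}_x}$ of the required cardinality and then let Claim \ref{kompl} do the work. I would take
\[
\mathcal{F} = \{F \subset [n] : |F| \leq p\},
\]
which has exactly $\sum_{i=0}^{p}\binom{n}{i}$ members by a direct count. This is the natural dual, under the complementation map $\mathcal{F} \mapsto \mathcal{F}^-$ developed earlier in the section, of the ``all large sets'' family motivating Theorem \ref{3n-p}: there one picks all subsets of size at least $n-p$, and complementing gives all subsets of size at most $p$.

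The verification that $\mathcal{F} \in \overline{\mathbf{H}_x}$ reduces to a single inequality. For any triple $\mathcal{A} = \{A,B,C\} \subset \mathcal{F}$, write $s = |A|+|B|+|C|$; the size restriction gives $s \leq 3p$. By Claim \ref{kompl},
\[
\overline{d(\mathcal{A})} = 2s - d(\mathcal{A}) \leq 2s \leq 6p \leq x,
\]
the last inequality being the hypothesis $x \geq 6p$. So every triple drawn from $\mathcal{F}$ satisfies $\overline{d(\mathcal{A})} \leq x$, which is precisely the condition for membership in $\overline{\mathbf{H}_x}$. Combined with the cardinality count, this yields $\overline{h}(n,x) \geq \sum_{i=0}^{p}\binom{n}{i}$.

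There is no real obstacle here; the argument is a one-line application of Claim \ref{kompl} together with the trivial estimate $d(\mathcal{A}) \geq 0$, and it works uniformly in $q = 0,1,\dots,5$ because only $x \geq 6p$ is used. The reason for isolating the claim is that it furnishes the lower bound that the subsequent upper-bound analysis in Theorem \ref{3n-p} must meet, and in the finer cases $q \geq 2$ the construction will later be enlarged by carefully chosen sets of size $p+1$ to recover the extra terms $\binom{n-2}{p-1}$, $\binom{n-1}{p}$, $\binom{n-2}{p}$ appearing in the theorem — but that refinement lies outside the present claim.
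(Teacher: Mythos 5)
Your proof is correct and is essentially identical to the paper's: both take $\mathcal{F}=\{F : |F|\leq p\}$ and apply Claim \ref{kompl} with the trivial bound $d(\mathcal{A})\geq 0$ to get $\overline{d(\mathcal{A})}\leq 2s\leq 6p\leq x$. Your write-up is in fact slightly more careful than the paper's, which writes the chain as an equality $2(|A|+|B|+|C|)=6p$ where an inequality is meant.
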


\begin{proof}
Let $\mathcal{F}= \{ F : |F| \leq p\}$ be a family. By Claim \ref{kompl}, for all $\mathcal{A} \subset \mathcal{F}$, $\overline{d(\mathcal{A})} \leq 2(|A|+|B|+|C|) = 6p$, which means $\mathcal{F}\in \overline{\mathbf{H}_{6p}} \subset \overline{\mathbf{H}_{x}}$.   
\end{proof}

\begin{all} \label{4p+q}
Let $x=6p+q$ and $n \geq 2^{3p+2} p^2 + p +1$. If there exists $A$, $B \in \mathcal{F}$ such that $\overline{d(A,B)} > 4p+q$, then $\mathcal{F}$ cannot be a maximal family in $\overline{\mathbf{H}_x}$.
\end{all}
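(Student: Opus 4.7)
I would argue by contradiction: assume $\mathcal{F}$ is maximal and show that $|\mathcal{F}|<\sum_{i=0}^p\binom{n}{i}$, contradicting the preceding claim, which furnishes a family of exactly this size in $\overline{\mathbf{H}_x}$.

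The starting identity
\[
\overline{d(A,B,C)} \;=\; \overline{d(A,B)} + |C\setminus A| + |C\setminus B|,
\]
an immediate consequence of $\overline{d(X,Y,Z)} = 2(|X|+|Y|+|Z|)-(|X\cap Y|+|Y\cap Z|+|X\cap Z|)$, combined with $\overline{d(A,B,C)}\leq x=6p+q$ and $\overline{d(A,B)}>4p+q$, yields
\[
|C\setminus A|+|C\setminus B|\leq 2p-1 \qquad \text{for every } C \in \mathcal{F}.
\]
Writing $C_1=C\cap A\cap B$, $C_2=C\cap(A\triangle B)$, $C_3=C\setminus(A\cup B)$ with sizes $c_1,c_2,c_3$, this becomes $c_2+2c_3\leq 2p-1$, so $c_3\leq p-1$. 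Next, using the reduction established earlier in this section to pass (without loss of generality) to a downward-closed shifted family of the same cardinality, the triple inequality evaluated on $A,\,A\setminus\{i\},\,A\setminus\{j\}$ for distinct $i,j\in A$ yields $3|A|\leq x$, hence $|A|\leq 2p+q/3\leq 2p+1$; analogously $|B|\leq 2p+1$. Setting $u=|A\cap B|$ and $t=|A\triangle B|$, this forces $u+t/2\leq 2p+1$.

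Counting the permitted triples $(C_1,C_2,C_3)$ gives
\[
|\mathcal{F}| \;\leq\; 2^u\sum_{c_3=0}^{p-1}\binom{n-u-t}{c_3}\sum_{c_2=0}^{2p-1-2c_3}\binom{t}{c_2}.
\]
The $n$-leading summand is $c_3=p-1,\;c_2\in\{0,1\}$, of size $2^u(1+t)\binom{n-u-t}{p-1}$; optimizing $2^u(1+t)$ under $u+t/2\leq 2p+1$ gives maximum $3\cdot 4^p$, attained near $u=2p$, $t=2$. All other summands are either lower order in $n$ (those with $c_3<p-1$) or $n$-independent of order $2^{u+t}\leq 2^{4p+2}$ (the $c_3=0$ summand). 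The hypothesis $n\geq 2^{3p+2}p^2+p+1$ yields $(n-p+1)/p\geq 4p\cdot 8^p$, which exceeds $3\cdot 4^p$ by the factor $(4/3)\,p\cdot 2^p\geq 8/3$ for every $p\geq 1$, so $3\cdot 4^p\binom{n}{p-1}<\binom{n}{p}$ with room to spare; the $n$-independent $2^{4p+2}$ is absorbed similarly. Hence $|\mathcal{F}|<\binom{n}{p}\leq\sum_{i=0}^p\binom{n}{i}$, a contradiction.

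The main technical difficulty I foresee is the rigorous control in the counting step: showing that every summand besides the two dominant ones $(c_3,c_2)=(p-1,0),(p-1,1)$ is genuinely negligible under the stated threshold on $n$, which requires splitting according to the size of $t$ (the naive $2^t$ bound on the inner binomial sum is too weak for small $c_3$ and large $t$). A secondary concern is the distinctness of $A,\,A\setminus\{i\},\,A\setminus\{j\}$ used in the size bound for $|A|$, which is guaranteed by the downward-closed reduction once $|A|\geq 2$; the residual cases $|A|\leq 1$ are trivial.
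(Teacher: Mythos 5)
Your argument is essentially the paper's: the heart of both proofs is the observation that, since $\overline{d(A,B,C)}=\overline{d(A,B)}+|C\setminus A|+|C\setminus B|$ and $\overline{d(A,B)}>4p+q$, every $C\in\mathcal{F}$ satisfies $|C\setminus(A\cup B)|\le p-1$, after which one counts the admissible $C$ and shows the total is beaten by $\binom{n}{p}\le\sum_{i=0}^p\binom{n}{i}$, a size already attained in $\overline{\mathbf{H}_x}$. The paper's count is cruder than yours: it allows an arbitrary trace on $A\cup B$ and an arbitrary at-most-$(p-1)$-element set outside, giving $|\mathcal{F}|\le 2^{3p+2}p\binom{n}{p-1}$, which the hypothesis $n\ge 2^{3p+2}p^2+p+1$ already defeats. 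So your finer analysis of the constraint $c_2+2c_3\le 2p-1$ and the optimization of $2^u(1+t)$ is more work than is needed, and the ``main technical difficulty'' you foresee disappears if you drop the refinement.

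The one genuine gap is your derivation of $|A|,|B|\le 2p+1$. Evaluating the triple inequality on $A$, $A\setminus\{i\}$, $A\setminus\{j\}$ requires these sets to belong to $\mathcal{F}$, and the downward-closed reduction you invoke is not available for this statement: replacing $\mathcal{F}$ by a closed family of the same size gives a \emph{different} family, with no guarantee that it still contains a pair with $\overline{d(A,B)}>4p+q$ (nor that $A$ and $B$ themselves survive), so the hypothesis of the claim need not persist; moreover the claim is later applied to families with prescribed structure, where such a WLOG would be illegitimate. The repair is immediate and is what the paper does: $\overline{d(A,B)}=2f(\{A,B\},1)+3f(\{A,B\},2)\ge 2|A\cup B|$, while $\overline{d(A,B)}\le\overline{d(A,B,C)}\le x\le 6p+5$ by your own identity, so $|A\cup B|\le 3p+2$ with no closure assumption --- which is all the counting step requires.
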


\begin{proof}
Suppose that $\mathcal{F} \in \overline{\mathbf{H}_x}$ and $A$, $B \in \mathcal{F}$. We can see that $2 \cdot f(\{A,B\},1) + 2 \cdot f(\{A,B\},2) \leq \overline{d(A,B)} \leq 6p+q$ so $|A \cup B| \leq \lfloor \frac{6p+q}{2} \rfloor \leq 3p+2$.

If there exists $C \in \mathcal{F}$ such that $| ([n] \setminus A \setminus B) \cap C | \geq p$, then $\overline{d(\mathcal{A})} \geq \overline{d(A,B)} + 2p > 6p+q$ which is a contradiction. So for all $C \in \mathcal{F}$, $| ([n] \setminus A \setminus B) \cap C |< p-1$ holds, therefore $|\mathcal{F}| \leq 2^{|A \cup B|} \cdot \sum_{i=0}^{p-1} {|[n] \setminus A \setminus B| \choose i} \leq 2^{3p+2} \cdot \sum_{i=0}^{p-1} {n \choose p-1} < 2^{3p+2} p {n \choose p-1}$.

If $n \geq 2^{3p+2} p^2 + p +1$ then $|\mathcal{F}| < 2^{3p+2} p {n \choose p-1} \leq {n \choose p} < \sum_{i=0}^{p} {n \choose i}$, hence $\mathcal{F}$ cannot be a maximal family.
\end{proof}

\begin{all} \label{2p+q}
Let $x=6p+q$ and $n \geq 2^{3p+2} \cdot p^2 + p$. If there is an $A \in \mathcal{F} \in \overline{\mathbf{H}_x}$ such that $|A| > p+\frac{q}{2}$ then $\mathcal{F} < \overline{h}(n,x)$.
\end{all}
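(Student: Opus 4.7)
The plan is to combine Claim \ref{4p+q} as a WLOG reduction with a direct counting argument. First, if $|\mathcal{F}| \leq 2$ the target inequality is immediate, so assume $|\mathcal{F}| \geq 3$. Either some pair $A', B' \in \mathcal{F}$ satisfies $\overline{d(A',B')} > 4p+q$, in which case Claim \ref{4p+q} already yields $|\mathcal{F}| < \overline{h}(n,x)$ and we are done, or $\overline{d(A', B')} \leq 4p+q$ for every pair in $\mathcal{F}$. I work in the latter case, so in particular $\overline{d(A,B)} \leq 4p+q$ for every $B \in \mathcal{F}\setminus\{A\}$.

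The key step is to upgrade this pair bound into a sharp bound on $|B \setminus A|$. From $\overline{d(A,B)} = 2(|A|+|B|)-|A\cap B| \leq 4p+q$, I get $|A\cap B| \geq 2|A|+2|B|-4p-q$. The hypothesis $|A|>p+q/2$ gives $2|A| > 2p+q$, and since both sides are integers this upgrades to $2|A| \geq 2p+q+1$; substituting yields $|A\cap B| \geq 2|B|-2p+1$, hence $|B \setminus A| \leq 2p-1-|B|$. A case split on whether $|B|<p$ or $|B|\geq p$ then gives $|B \setminus A| \leq p-1$ in both cases. The same inequality combined with $|A\cap B|\leq|B|$ also yields $|A| \leq 2p+\lfloor q/2\rfloor \leq 2p+2$.

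With these two bounds, every $B \in \mathcal{F}\setminus\{A\}$ is determined by $B \cap A \subseteq A$ (at most $2^{|A|}$ choices) together with $B\setminus A \subseteq [n]\setminus A$ of size at most $p-1$, so
\[
|\mathcal{F}| \;\leq\; 1 + 2^{|A|}\sum_{t=0}^{p-1}\binom{n-|A|}{t} \;\leq\; 1 + p\cdot 2^{2p+2}\binom{n}{p-1}.
\]
Writing $\binom{n}{p-1} = \tfrac{p}{n-p+1}\binom{n}{p}$, the hypothesis $n \geq 2^{3p+2}p^2+p$ makes $\tfrac{p^2 \cdot 2^{2p+2}}{n-p+1}$ strictly less than $1$, so $|\mathcal{F}| < 1+\binom{n}{p} \leq \sum_{i=0}^p\binom{n}{i} \leq \overline{h}(n,x)$. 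The borderline case $p=1$, where $1+\binom{n}{p}$ equals $\sum_{i=0}^p\binom{n}{i}$, is handled because the previous strict inequality already forces $|\mathcal{F}|\leq n < n+1$.

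I expect the main obstacle to be extracting the $|B \setminus A|\leq p-1$ bound rather than the counting step: from the triple condition alone one only gets the weaker pair bound $\overline{d(A,B)}\leq 6p+q$, which yields $|B\setminus A|\leq 2p-1$, and this is too weak to beat $\binom{n}{p}$ in the final count. Reducing to the pair bound $\overline{d}\leq 4p+q$ via Claim \ref{4p+q} is therefore the essential move.
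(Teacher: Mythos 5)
Your proposal is correct and follows essentially the same route as the paper: both reduce via Claim \ref{4p+q} to the situation where every $B\in\mathcal{F}$ satisfies $|B\setminus A|\le p-1$ (the paper phrases this as ``if some $B$ has $|([n]\setminus A)\cap B|\ge p$ then $\overline{d(A,B)}>4p+q$,'' which is the contrapositive of your pair-bound computation), and then both conclude with the count $|\mathcal{F}|\le 2^{|A|}\,p\binom{n}{p-1}<\binom{n}{p}\le\overline{h}(n,x)$. The only cosmetic differences are that you bound $|A|\le 2p+2$ from the pair condition while the paper uses the cruder $|A|\le 3p+2$ from the triple condition, and your explicit handling of the $|\mathcal{F}|\le 2$ and $p=1$ edge cases is slightly more careful than the original.
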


\begin{proof} 
If $|A| \geq 3p+3$, then $\mathcal{F} \notin \overline{\mathbf{H}_x}$, so we can assume that $|A| \leq 3p+2$. If there exists a $B \in \mathcal{F}$ such that $|([n] \setminus A) \cap B| \geq p$, then $\overline{d(A,B)} > 4p+q$, so we are done by Claim \ref{4p+q}.
    
If there is no such set, then $|\mathcal{F}| \leq 2^{3p+2} p {n \choose p-1} < \sum_{i=0}^{p} {n \choose i} \leq \overline{h}(n,x)$.
\end{proof}

Denote the family $\mathcal{F} \cap {[n] \choose i}$ by $\mathcal{F}_i$.

The proof distinguishes cases according to the value of $q$:

\textbf{Case 1: $q=0$ or $q=1$.} 
By Claim \ref{2p+q}, if $|\mathcal{F}| = \overline{h}(n,x)$, then all sets in $\mathcal{F}$ have at most $p$ elements. From here it is easy to see that  $|\mathcal{F}| \leq \sum_{i=0}^p {n \choose p}$ and $\mathcal{F}=\{F : |F| \leq p\}$ is a maximal family.

\textbf{Case 2: $q=2$.} 
By Claim \ref{2p+q}, if $|\mathcal{F}| = \overline{h}(n,x)$, then all sets in $\mathcal{F}$ have at most $p+1$ elements. 
If there are sets $A$ and $B \in \mathcal{F}$ such that $|A|=|B|=p+1$ and $|A \cap B| \leq 1$, then $\overline{d(A,B)} \geq 4p+3$, so $|\mathcal{F}| < \overline{h}(n,x)$ by Claim \ref{4p+q}.

Theorem \ref{tmetszet} implies $|\mathcal{F}_{p+1}| \leq {n-2 \choose p+1-2}$. So $|\mathcal{F}| \leq \sum_{i=0}^p {n \choose p} + {n-2 \choose p-1}$. The equality holds if $|F| \leq p$ or $|F|=p+1$ and $1$, $2 \in F$.

\textbf{Case 3: $q=4$.} By Claim \ref{2p+q}, if $|\mathcal{F}| = \overline{h}(n,x)$, then all sets in $\mathcal{F}$ have at most $p+2$ elements. 

If there exists $A \in \mathcal{F}$ such that $|A|=p+2$, then $|A \cap B| \geq 4$ for all $B \in \mathcal{F}_{p+2}$ and $|A \cap C| \geq 2$ for all $C \in \mathcal{F}_{p+1}$ by Claim \ref{4p+q}. So in this case $|\mathcal{F}_{p+1}| \leq {p+2 \choose 2} {n-2 \choose p-1}$ and $|\mathcal{F}_{p+2}| \leq {n-4 \choose p-2}$ by Theorem \ref{tmetszet}. Hence we have $|\mathcal{F}| \leq {n-4 \choose p-2} + {p+2 \choose 2} {n-2 \choose p-1} + \sum_{i=0}^p {n \choose p}$.

If there are no sets with $p+2$ elements, then all sets have at most $p+1$ elements. If there exist sets $A$, $B$, $C \in \mathcal{F}_{p+1}$ %such that $|A|=|B|=|C|=p+1$ 
satisfying $d(\mathcal{A}) \leq 1$, then $\overline{d (\mathcal{A})} \geq 6p+5$ by Claim \ref{kompl}. So $\mathcal{F}_{p+1} \in \mathbf{H}_2$. Since $n \geq 2^{3p+2} p^2 + p + 1 \geq 4 (p+1)^3$, we can use Theorem \ref{unialso2}, so $\mathcal{F}$ contains at most ${n-1 \choose p}$ sets with $p+1$ elements so $|\mathcal{F}| \leq {n-1 \choose p} + \sum_{i=0}^p {n \choose p}$.

Comparing the two cases, we can see that the upper bound obtained in the latter case is larger, so $|\mathcal{F}| \leq {n-1 \choose p} + \sum_{i=0}^p {n \choose p}$ and equality holds if $\mathcal{F} = \{F: |F| \leq p$ or $(|F|=p+1$ and $1 \in F) \}$.

\textbf{Case 4: $q=3$.} Since $\mathcal{F} = \{F: |F| \leq p$ or ($|F|=p+1$ and $1 \in F) \} \in \overline{\mathbf{H}_{6p+3}}$, then ${n-1 \choose p} + \sum_{i=0}^p {n \choose p} \leq \overline{h}(n,6p+3)$.

Since $\mathcal{F} \in \overline{\mathbf{H}_{x-1}}$ implies $\mathcal{F} \in \overline{\mathbf{H}_x}$, then $\overline{h}(n,6p+3) \leq \overline{h}(n,6p+4) = {n-1 \choose p} + \sum_{i=0}^p {n \choose p} $.

\textbf{Case 5: $q=5$.} By Claim \ref{2p+q}, if $|\mathcal{F}| = \overline{h}(n,x)$, then all sets in $\mathcal{F}$ have at most $p+2$ elements. 

If there exist sets $A$, $B \in \mathcal{F}_{p+2}$ and a set $C \in \mathcal{F}_{p+1}$, then $|A \cap B| \geq 3$ and $|A \cap C| \geq 1$ by Claim \ref{4p+q}. 

There are 3 subcases we need to look at:

\begin{enumerate}
    \item There exist sets $A \in \mathcal{F}_{p+2}$ and $B \in \mathcal{F}_{p+1}$ such that $|A \cap B| = 1$.
    \item There exists a set $A \in \mathcal{F}_{p+2}$ but there are no sets $B \in \mathcal{F}_{p+1}$ such that $|A \cap B| \leq 1$.
    \item $\mathcal{F}_{p+2}$ is empty.
    
\end{enumerate}

(1): The family $\mathcal{F}_{p+2}$ forms a $3$-intersecting family, so by Theorem \ref{tmetszet}, $|\mathcal{F}_{p+2}| \leq {n-3 \choose p-1}$.

Since $|A \cap B|=1$, then $\overline{d(A,B)} = 4p+5$, so $|C \setminus (A \cup B)| \leq p-1$ true for all $C \in \mathcal{F}_{p+1}$. From this we get that $|\mathcal{F}_{p+1}| \leq {|(A \cup B)| \choose 2}{n \choose p-1} = {2p+3 \choose 2}{n \choose p-1}$.

(2): $|\mathcal{F}_{p+2}| \leq {n-3 \choose p-1}$ holds as before. Since $|A \cap B| \geq 2$ is true for all $B \in \mathcal{F}_{p+1}$, then $|\mathcal{F}_{p+1}| \leq {|A| \choose 2}{n \choose p-1} = {p+2 \choose 2}{n \choose p-1}$.

(3): If there are 3 pairwise disjoint members of $\mathcal{F}_{p+1}$, then $\overline{d(\mathcal{A})} \geq 6p+6$, so $d(\mathcal{A}) \geq 1$ true for all $\mathcal{A} \subset \mathcal{F}_{p+1}$. Since $n \geq 2^{3p+2} p^2 + p + 1 \geq 3 (p+1)^2 \cdot 2$, then $|\mathcal{F}_{p+1}| \leq \max \{ {n \choose p+1} - {n-2 \choose p+1}, {(p+1)(2+1)-1 \choose p+1} \}$ by using the special case of EMC \cite{Huang}. If $n \geq 7p$, then $\prod_{i=n-p}^{n-2} i \geq \prod_{i=2p+4}^{3p+2} i$ and $n (n-1) - (n-p-1)(n-p-2) \geq (2p+3)(2p+2)$, so the first value is the larger and $|\mathcal{F}_{p+1}| \leq {n \choose p+1} - {n-2 \choose p+1} = {n-1 \choose p} + {n-2 \choose p}$.

Summarizing (1), (2), (3): $|\mathcal{F}| \leq \max \{ {n-3 \choose p-1} + {2p+3 \choose 2}{n \choose p-1}, {n-3 \choose p-1} + {p+2 \choose 2}{n \choose p-1}, {n-1 \choose p} + {n-2 \choose p}\} + \sum_{i=0}^p {n \choose p}$. It can be proven by calculation that ${n-3 \choose p-1} \leq {n-2 \choose p}$ and ${p+2 \choose 2}{n \choose p-1} \leq {2p+3 \choose 2}{n \choose p-1} \leq {n-1 \choose p}$ if $n \geq 8p^3$, so $|\mathcal{F}| \leq {n-1 \choose p} + {n-2 \choose p} + \sum_{i=0}^p {n \choose p}$. Equality holds if $\mathcal{F} = \{F: |F| \leq p$ or $(|F|=p+1$ and $|\{1, 2\} \cap F| \geq 1) \}$. $\square$

\vspace{1cm}

%\newpage

\section*{Acknowledgement}

The author would like to thank Gyula Katona for his helpful remarks regarding this manuscript.

\end{document}